\newtheorem{theorem}{Theorem}[section]
\newtheorem{lemma}[theorem]{Lemma}
\newtheorem{proposition}[theorem]{Proposition}
\newtheorem{corollary}[theorem]{Corollary} 
\newtheorem{definition}[theorem]{Definition}
\newtheorem{conjecture}[theorem]{Conjecture}
\newcommand{\swap}[2]{\xleftrightarrow{#1 \quad #2}}
\newcommand\Bcal{{\mathcal B}}
\newcommand\Hcal{{\mathcal H}}
\newcommand\Rcal{{\mathcal R}}
\DeclareMathOperator{\cl}{cl}
\DeclareMathOperator{\type}{type}
\begin{document}

\title{White's Conjecture for Paving Matroids}
\author{Yu-Chuan Yu, Chi Ho Yuen}
\address{Department of Applied Math., National Yang Ming Chiao Tung University, Taiwan}
\email{\url{yuyuch0303@gmail.com}, \url{chyuen@math.nctu.edu.tw}}
\subjclass[2020]{Primary: 05B35; Secondary: 13F65}

\maketitle
\begin{abstract}
White's conjecture asserts that any two tuples of matroid bases that have the same multi-set union can be transformed from one to another by symmetric exchanges; it also implies that the toric ideals of matroids are generated by the binomials encoding these exchanges. We prove White's conjecture for the class of paving matroids. Our strategy is to generalize the inductive argument using circuit-hyperplane relaxations in the recent work of Han et al. to stressed hyperplane relaxations.
\end{abstract}

\section{Introduction}

Matroids are combinatorial structures that capture the notion of independence in various mathematical areas from graph theory to algebraic geometry. A standard characterization of matroids is the {\em symmetric exchange property} of bases: given two distinct bases $A,B$ of a matroid $M$ and $a\in A\setminus B$, there exists $b\in B\setminus A$ such that both $(A\cup\{b\})\setminus\{a\}, (B\cup\{a\})\setminus\{b\}$ are bases. A natural question, first due to White \cite{White1980}, is a multi-basis, multi-step generalization of the said property. 

\begin{conjecture} \label{conj:White}
    Given two ordered tuples of bases $(B_1,\ldots, B_n)$ and $(B'_1,\ldots,B'_n)$ with equal multi-set union, there exists an {\em exchange sequence} $(B_1^{(0)},\ldots,B_n^{(0)})=(B_1,\ldots,B_n), (B_1^{(1)},\ldots,B_n^{(1)}),\ldots,(B_1^{(\ell)},\ldots,B_n^{(\ell)})=(B'_1,\ldots,B'_n)$ of tuples of bases such that, within each step indexed by $0<t\leq\ell$, some pair $B_i^{(t)}, B_j^{(t)}$ is obtained from $B_i^{(t-1)}, B_j^{(t-1)}$ by a symmetric exchange (and other bases remain the same).
\end{conjecture}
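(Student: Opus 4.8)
The plan is to attack White's conjecture by reducing an instance on a matroid $M$ to instances on structurally simpler matroids, and inducting. The engine is the operation of \emph{stressed-hyperplane relaxation}: if $H$ is a hyperplane of $M=(E,\Bcal)$ (so $\operatorname{rk}_M(H)=r-1$, where $r=\operatorname{rk}(M)$) whose restriction $M|_H$ is uniform, i.e.\ $M|_H\cong U_{r-1,|H|}$, then
\[
\Bcal'\ :=\ \Bcal\ \sqcup\ \bigl\{\,S\subseteq H:\ |S|=r\,\bigr\}
\]
is again the set of bases of a matroid $M'=\operatorname{Rel}(M,H)$ on $E$ --- this is exactly what the uniformity hypothesis buys --- and $M'$ sits strictly below $M$ in a natural complexity order, for instance in the total number of dependent $r$-subsets, or in a suitable weighted count of non-trivial hyperplanes. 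Matroids with no stressed hyperplane form the base of the induction, and the argument is arranged so that White's conjecture is only ever invoked for those base matroids and for uniform matroids (where it is classical, uniform matroids being strongly base orderable). This generalizes the circuit-hyperplane induction of Han et al., a circuit-hyperplane being the special case $|H|=r$.

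The inductive step is the heart of the matter. Assume White's conjecture for $M'=\operatorname{Rel}(M,H)$ and let $\mathbf B=(B_1,\dots,B_n)$ and $\mathbf B'=(B_1',\dots,B_n')$ be tuples of bases of $M$ with the same multiset union. Every basis of $M$ is a basis of $M'$, so there is an exchange sequence from $\mathbf B$ to $\mathbf B'$ inside $M'$; the trouble is that a symmetric exchange legal in $M'$ need not be legal in $M$, and intermediate tuples may contain ``new'' bases $S\subseteq H$ from $\Bcal'\setminus\Bcal$. I would establish: (i) the excursions of the $M'$-sequence into $\Bcal'\setminus\Bcal$ can be gathered into consecutive blocks during which the multiset of exterior parts $\{B_i\setminus H\}$ stays fixed and only the coordinates inside $H$ are moved; (ii) on such a block the reshuffling of the $H$-parts is an exchange problem internal to the uniform matroid $M|_H$ and its relaxation $U_{r,|H|}$, so White's conjecture for uniform matroids re-realizes the block by exchanges which, once a gluing lemma re-attaches the fixed exterior parts, are symmetric exchanges of $M$ itself; and (iii) the remaining $M'$-exchanges that stay within $\Bcal$ yet fail to be legal in $M$ occur only at the seams of such blocks and are absorbed by the same device. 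Concatenating the rebuilt blocks produces an exchange sequence from $\mathbf B$ to $\mathbf B'$ entirely within $\Bcal$.

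The principal obstacle --- and the reason this scheme does not, as it stands, settle the conjecture in full --- is the base of the induction. For the argument to close unconditionally one would need every matroid to be reducible, through a chain of stressed-hyperplane relaxations, to a matroid for which White's conjecture is already known; but a general matroid may possess no stressed hyperplane at all, and the terminal matroids of the relaxation process need not be uniform or otherwise tractable. The method therefore yields White's conjecture precisely for matroids whose relaxation process terminates at a White-verified matroid --- which includes in particular the paving matroids, the natural class to single out, since there every hyperplane is automatically stressed and the process ends at a uniform matroid. Pushing toward the general statement would require two genuinely new ingredients: a form of steps (i)--(iii) valid when $M|_H$ is merely a matroid rather than uniform --- for which there is presently neither a relaxation operation nor a White's conjecture input to call upon --- and a universal base class; both appear to lie beyond the relaxation framework.
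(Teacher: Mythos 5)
The statement you were asked about is Conjecture~\ref{conj:White} itself, which the paper does not prove in general --- it proves it only for paving matroids --- and you correctly recognize this: your proposal is a strategy outline that, as you say yourself, closes only when the relaxation process terminates at a White-verified matroid, i.e.\ essentially the paving case. At that level your plan is the paper's plan: induct on a chain of stressed-hyperplane relaxations ending at a uniform matroid, take the exchange sequence guaranteed in the relaxed matroid $\widetilde{M}$, and repair its excursions through the new bases $S\subseteq H$ so that it lives in $M$.

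The genuine gap is in your step (ii), which is where all the actual work of the paper happens and where the paving hypothesis is used. You propose that the detours through $\Bcal'\setminus\Bcal$ can be confined to blocks in which only the $H$-parts move, so that the repair reduces to White's conjecture for the uniform matroid $M|_H$ plus a gluing lemma. That is not the real difficulty. A type-$0$ subset $X\subseteq H$ appears in the $\widetilde{M}$-sequence adjacent to bases $Y$ of $M$ with $|Y\setminus H|\geq 2$, and the exchanges entering and leaving $X$ move elements \emph{across} $H$, not within it; the multiset of exterior parts does not stay fixed on these blocks. The paper isolates the irreducible obstruction as condition (*): a two-step exchange sequence in $\widetilde{M}$ (in degree $2$ or $3$) whose endpoints are tuples of bases of $M$ but whose middle tuple contains a type-$0$ subset must be replaceable by an exchange sequence in $M$. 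Reducing the general repair problem to (*) already requires a delicate induction and several auxiliary replacement lemmas (the paper's Propositions~\ref{prop:XB_XB} and~\ref{prop:XBB_XBB} and the case analysis of Section~5), and verifying (*) for paving matroids is the technical core (Propositions~\ref{prop:XY_XY} and~\ref{prop:XYZ_XYZ}): one shows that if every candidate repair fails, then several $r$-subsets arising from the exchange data are circuits spanning distinct hyperplanes, that these hyperplanes must pairwise cover $E$, and that this contradicts the fact that distinct hyperplanes of a paving matroid meet in at most $r-2$ elements (Proposition~\ref{prop:SH_intersection}) via a double count of $\sum|H\cap H'|$. None of this is present in, or substituted for by, your outline; the part of your sketch that handles exchanges staying inside $H$ (your uniform-matroid reshuffling) corresponds only to the easy Lemma~\ref{lem:type1-WC} of the paper. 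So while your diagnosis of why the method cannot reach the general conjecture is accurate, the proposal as written does not yet constitute a proof even of the paving case.
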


Conjecture~\ref{conj:White} is one of the most fundamental and well-known problems in matroid theory. Despite its elementary description, it is of interest to combinatorial commutative algebra and algebraic geometry: the conjecture describes the generators of the {\em toric ideals of matroids}, which include the ideals defining the {\em torus orbit closures} in Grassmannians \cite[Section~13.2]{MichalekSturmfels}.

\begin{conjecture} \label{conj:IM}
    Let $M$ be a matroid on ground set $E$ with set of bases $\Bcal$. Consider the kernel $I_M$ of the map $f_M:\mathbb{K}[y_B:B\in\Bcal]\rightarrow \mathbb{K}[x_e:e\in E]$ given by $y_B\mapsto \prod_{e\in B}x_e$. Then $I_M$ is generated by binomials of the form $y_Ay_B-y_{A'}y_{B'}$, where $A', B'$ are obtained from $A,B$ by a symmetric exchange.
\end{conjecture}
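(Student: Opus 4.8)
The plan is to prove that Conjecture~\ref{conj:White} implies Conjecture~\ref{conj:IM}; combined with the paper's main result, this gives Conjecture~\ref{conj:IM} for paving matroids. Fix $M$ of rank $r$ on $E$ with basis set $\Bcal$, and let $J\subseteq\mathbb{K}[y_B:B\in\Bcal]$ be the ideal generated by all quadratic binomials $y_Ay_B-y_{A'}y_{B'}$ with $(A',B')$ obtained from $(A,B)$ by a symmetric exchange. The inclusion $J\subseteq I_M$ is immediate, since a symmetric exchange preserves the multi-set union of the two involved bases and hence each such binomial lies in $\ker f_M$. So the whole task is to show $I_M\subseteq J$.

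First I would invoke the fact that $f_M$ is a monomial map, so that $I_M=\ker f_M$ is spanned as a $\mathbb{K}$-vector space by \emph{pure-difference binomials} $\mathbf{y}^{\mathbf u}-\mathbf{y}^{\mathbf v}$ with $f_M(\mathbf{y}^{\mathbf u})=f_M(\mathbf{y}^{\mathbf v})$: for $g\in\ker f_M$ one partitions the monomials of $g$ according to their common image, observes that the coefficients within each block sum to zero, and rewrites each block as a $\mathbb{K}$-combination of pure-difference binomials in $I_M$. Writing $\mathbf{y}^{\mathbf u}=y_{B_1}\cdots y_{B_n}$ and $\mathbf{y}^{\mathbf v}=y_{B'_1}\cdots y_{B'_m}$, the condition $f_M(\mathbf{y}^{\mathbf u})=f_M(\mathbf{y}^{\mathbf v})$ reads $\prod_e x_e^{c_e}=\prod_e x_e^{c'_e}$ where $c_e=\#\{i:e\in B_i\}$ and $c'_e=\#\{i:e\in B'_i\}$; that is, the multi-sets $B_1\uplus\cdots\uplus B_n$ and $B'_1\uplus\cdots\uplus B'_m$ coincide. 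Comparing total degrees, $nr=\sum_e c_e=\sum_e c'_e=mr$, so $n=m$, and thus $(B_1,\ldots,B_n)$ and $(B'_1,\ldots,B'_n)$ satisfy the hypothesis of Conjecture~\ref{conj:White}.

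Next I would apply Conjecture~\ref{conj:White} to get an exchange sequence $(B_1^{(t)},\ldots,B_n^{(t)})_{t=0}^{\ell}$ from $(B_1,\ldots,B_n)$ to $(B'_1,\ldots,B'_n)$. In step $t$ only one pair of entries changes --- say those with indices $i$ and $j$ --- via a symmetric exchange, and crucially every $B_k^{(t)}$ is again a basis of $M$, so each $y_{B_k^{(t)}}$ is an honest variable; hence
\[
y_{B_1^{(t-1)}}\cdots y_{B_n^{(t-1)}}-y_{B_1^{(t)}}\cdots y_{B_n^{(t)}}=\Bigl(\,\prod_{k\neq i,j}y_{B_k^{(t-1)}}\Bigr)\bigl(y_{B_i^{(t-1)}}y_{B_j^{(t-1)}}-y_{B_i^{(t)}}y_{B_j^{(t)}}\bigr)\in J.
\]
Telescoping over $t=1,\ldots,\ell$ gives $\mathbf{y}^{\mathbf u}-\mathbf{y}^{\mathbf v}=\sum_{t=1}^{\ell}\bigl(y_{B_1^{(t-1)}}\cdots y_{B_n^{(t-1)}}-y_{B_1^{(t)}}\cdots y_{B_n^{(t)}}\bigr)\in J$; since such binomials span $I_M$, we get $I_M\subseteq J$.

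This argument is purely formal; the only place requiring care is the opening reduction --- identifying $I_M$ with the span of pure-difference binomials and matching the ``equal image under $f_M$'' condition with the ``equal multi-set union'' hypothesis of Conjecture~\ref{conj:White} --- which is the standard description of a toric ideal. I do not expect a genuine obstacle here: all of the mathematical difficulty of the paper is concentrated in establishing Conjecture~\ref{conj:White} for paving matroids (through the inductive use of stressed hyperplane relaxations), and this implication is only the routine bridge from that theorem to the toric-ideal formulation.
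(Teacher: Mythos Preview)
The statement you are addressing is Conjecture~\ref{conj:IM}, which the paper presents as an \emph{open conjecture}, not a theorem; there is no proof of it in the paper to compare your proposal against. The paper merely asserts in the abstract and introduction that Conjecture~\ref{conj:White} implies Conjecture~\ref{conj:IM}, without writing out the argument.

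Your write-up correctly supplies that standard implication: the reduction of a toric ideal to pure-difference binomials, the translation of ``equal image under $f_M$'' into ``equal multi-set union'', and the telescoping along an exchange sequence are all valid. Combined with Theorem~\ref{thm:A}, this indeed yields Conjecture~\ref{conj:IM} for paving matroids, exactly as you say. But note that the statement as quoted is for an arbitrary matroid $M$; neither the paper nor your argument establishes it in that generality, and it remains open. So your proposal is a correct proof of the paving-matroid special case (and of the folklore implication), not of the conjecture itself.
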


This formulation leads to several algebraic variants of White's conjecture \cite{Haws2009,Herzog2002}, and establishes further connections with the polyhedral geometry of {\em matroid polytopes} \cite{backman2023}. For example, the weak White's conjecture asserts that $I_M$ is generated by quadrics, but we refer to Conjecture~\ref{conj:White} as White's conjecture here, and say ``White's conjecture in degree $n$'' for Conjecture~\ref{conj:White} with a fixed $n$.

White's conjecture and almost all of its variants remain open in general\footnote{Two notable variants that are proven for all matroids are (1) Conjecture~\ref{conj:IM} up to saturation \cite{Lason2014} and (2) the existence of regular unimodular triangulations of matroid polytopes \cite{backman2023}.}, but several special cases have been proven. We refer the reader to the recent paper by Han, Micha\l ek, and Weigert \cite{han2025white} (which we discuss in more details in the next subsection) for a list of related works, and only highlight the works by Bonin and B\'{e}rczi--Schwarcz:

\begin{theorem} \cite{BONIN2013} \label{thm:Bonin}
    White's conjecture is true for {\em sparse paving matroids}.
\end{theorem}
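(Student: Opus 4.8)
The plan is to induct on the number of non-bases of the sparse paving matroid $M$ --- equivalently, on the number of its circuit-hyperplanes --- using circuit-hyperplane relaxation to descend to the uniform case. Recall that a rank-$r$ sparse paving matroid on ground set $E$ has for its non-bases exactly the $r$-subsets lying in a fixed family $\mathcal N$ whose members pairwise differ in at least two elements, and that these coincide with its circuit-hyperplanes. If $\mathcal N=\varnothing$ then $M=U_{r,|E|}$ is uniform, and White's conjecture is known here: for uniform matroids every $r$-subset is a basis, so no symmetric exchange is ever obstructed and the problem becomes one of sorting one tuple into the other (alternatively, uniform matroids are strongly base orderable). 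If $\mathcal N\neq\varnothing$, choose $N\in\mathcal N$ and form the relaxation $M^{\circ}=M\cup\{N\}$; its non-bases are $\mathcal N\setminus\{N\}$, which still has the pairwise-far property, so $M^{\circ}$ is again sparse paving with one fewer non-basis and White's conjecture holds for it by the inductive hypothesis. It remains to transfer the conjecture from $M^{\circ}$ to $M$.

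So let $(B_1,\dots,B_n)$ and $(B'_1,\dots,B'_n)$ be tuples of bases of $M$ with equal multi-set union. They are also tuples of bases of $M^{\circ}$, so by the inductive hypothesis some exchange sequence joins them through symmetric exchanges of $M^{\circ}$. Since the bases of $M^{\circ}$ are exactly those of $M$ together with $N$, a step of this sequence fails to be a symmetric exchange of $M$ only when one of the (at most four) bases it involves equals $N$; hence it is enough to rewrite the sequence --- keeping its two endpoints, which do not involve $N$ --- so that the single \emph{forbidden} basis $N$ never occurs as a coordinate. Treating one relaxation at a time is what makes this feasible: we need only steer around a single basis $N$, and $N$ lies far from every other member of $\mathcal N$.

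The structural input is that $N$, being a hyperplane of $M$, has a clean neighbourhood of bases: $N-x+y$ is a basis of $M$ for all $x\in N$ and $y\notin N$, so every $r$-set at distance one from $N$ is a basis of $M$, and since every other non-basis meets $N$ in at most $r-2$ elements there are no non-bases anywhere near $N$. The rewriting then proceeds by replacing $N$, wherever it would occur, with a decoy basis $N-x+y$ of $M$; because this changes the multi-set union by one element, the new sequence must carry a compensating discrepancy --- one coordinate holding an extra element of $N$ in place of an element outside $N$ --- and this discrepancy is relayed from coordinate to coordinate by the exchanges precisely as $N$ itself is created, moved, and destroyed along the original sequence. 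Installing the decoy, relaying the discrepancy, and absorbing it back into the true sequence when $N$ disappears each require choices of $x$, $y$ and of the carrying coordinates, and the slack the sparse-paving hypothesis provides in these choices is exactly what keeps every intermediate step a legitimate symmetric exchange of $M$ that avoids all of $\mathcal N$.

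The main obstacle is to organise this rewriting coherently over the whole sequence at once: $N$ may occupy several coordinates simultaneously and may migrate between coordinates, so the decoys and the routing of the compensating discrepancy must be chosen consistently, and one must check that no relaying step or absorption step ever lands on a member of $\mathcal N$ at another coordinate or conflicts with nearby activity. Carrying out this bookkeeping --- in effect, showing that the locus of tuples ruled out by $N$ in the symmetric-exchange graph of $M^{\circ}$-tuples can be routed around --- is the technical heart of the argument; it is precisely this step whose generalisation from circuit-hyperplanes to stressed hyperplanes is developed in the remainder of the paper.
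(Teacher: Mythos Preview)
Your high-level plan --- induct on the number of circuit-hyperplanes, relax one at a time down to the uniform matroid, and show that White's conjecture passes from the relaxation $M^{\circ}$ back to $M$ --- is exactly the strategy of Han, Micha\l ek, and Weigert, which the present paper cites as an alternative proof of Theorem~\ref{thm:Bonin} and then generalises to stressed hyperplanes for its main result. (The paper itself does not prove Theorem~\ref{thm:Bonin}; it is quoted from \cite{BONIN2013}.)

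Where your proposal diverges, and where it has a genuine gap, is the mechanism for the inductive step. Your ``decoy plus carried discrepancy'' scheme is not spelled out at the level needed to be a proof, and as written it is not obviously sound: once you replace an occurrence of $N$ by $N-x+y$ at one coordinate, the compensating modification at another coordinate need not produce a basis of $M$; the relay of the discrepancy as $N$ migrates between coordinates is left undefined; and the interactions when $N$ occupies several coordinates simultaneously are precisely the kind of global coupling that resists ad hoc control. You concede this yourself in your final paragraph, deferring ``the technical heart'' to the rest of the paper --- but the rest of the paper does not in fact implement your decoy idea.

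The paper (and Han et al.\ in the circuit-hyperplane case) avoid any global substitution. Instead the exchange sequence is first massaged so that occurrences of the forbidden set at a fixed coordinate are \emph{isolated} --- never two consecutive --- via short local replacements (Propositions~\ref{prop:XB_XB} and~\ref{prop:XBB_XBB}). Each isolated occurrence then sits inside a two-step segment $(\text{good tuple})\to(\text{bad tuple})\to(\text{good tuple})$, which is replaced wholesale by an exchange sequence in $M$; this is exactly condition~(*), verified by hand in degrees $2$ and $3$ (Propositions~\ref{prop:XY_XY} and~\ref{prop:XYZ_XYZ}). The key idea you are missing is this reduction to a \emph{local} repair problem, which sidesteps the coherence issues your global decoy scheme would have to confront.
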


\begin{theorem} \cite{Berczi2024} \label{thm:Berczi_Schwarcz}
    White's conjecture in degree 2 is true for {\em split matroids}. Moreover, $\ell$ can be chosen to be $\leq\min\{r,r-|B_1\cap B'_1|+1\}$.
\end{theorem}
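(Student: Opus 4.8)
The plan is to reduce the degree-$2$ assertion to a connectivity-with-distance statement about complementary basis pairs of a minor, and then to exploit the polyhedral structure special to split matroids. To that end I would first run the standard degree-$2$ reduction. Let $(B_1,B_2)$ and $(B'_1,B'_2)$ be base pairs of the split matroid $M$ of rank $r$ with $B_1\cup B_2=B'_1\cup B'_2$ as multisets. Comparing multiplicities forces $B_1\cap B_2=B'_1\cap B'_2=:S$ and $B_1\triangle B_2=B'_1\triangle B'_2=:T$, so $|T|=2(r-|S|)$. Put $M':=(M/S)|_T$; it is a minor of $M$, hence again split, of rank $\rho:=r-|S|$ on the $2\rho$-element set $T$. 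For any base pair $(C_1,C_2)$ of $M$ with $C_1\cap C_2=S$ and $C_1\triangle C_2=T$, the set $X:=C_1\setminus S$ and its complement $T\setminus X=C_2\setminus S$ are both bases of $M'$, and a symmetric exchange on $(C_1,C_2)$ is exactly a symmetric exchange on the complementary pair $(X,T\setminus X)$ of $M'$ (and conversely); moreover $S$, hence $T$, is preserved along any exchange sequence. As $|X\setminus X'|=\rho-|X\cap X'|=r-|B_1\cap B'_1|$ and $\rho\le r$, it suffices to prove: \emph{in any split matroid $M'$ on $2\rho$ elements, two complementary basis pairs with first coordinates $X,X'$ can be joined by at most $\min\{\rho,\,|X\setminus X'|+1\}$ symmetric exchanges.}

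Next I would invoke the structure theory of split matroids. We may take $M'$ connected (the disconnected case is handled component by component). By the Joswig--Schmitt description there is a family of \emph{split hyperplanes} $H_1,\dots,H_k\subseteq T$, with $\rho_i:=r_{M'}(H_i)$, that are pairwise non-crossing inside the hypersimplex, and such that the polytope cut out by $|Z|=\rho$ and the inequalities $|Z\cap H_i|\le\rho_i$ is exactly the matroid polytope of $M'$; equivalently, a $\rho$-subset $Z\subseteq T$ is a basis of $M'$ iff $|Z\cap H_i|\le\rho_i$ for all $i$. Hence a $\rho$-subset $Z$ is \emph{balanced}---both $Z$ and $T\setminus Z$ being bases---precisely when
\[
    |H_i|-\rho_i\ \le\ |Z\cap H_i|\ \le\ \rho_i\qquad\text{for every }i,
\]
which is satisfiable only if $|H_i|\le2\rho_i$ for all $i$; under this hypothesis the balanced bases are the integer points of a transparent ``interval'' region, and our task becomes moving between two of them by single element swaps as quickly as possible.

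The heart of the argument is a greedy exchange. Given balanced $Z\ne Z'$, I would look for $a\in Z\setminus Z'$ and $b\in Z'\setminus Z$ with $Z-a+b$ balanced: such a swap is automatically a valid symmetric exchange (both $Z-a+b$ and its complement are bases by construction) and it decreases $|Z\setminus Z'|$ by one. The swap fails to be balanced only if inserting $b$ or deleting $a$ drives some $|Z\cap H_i|$ out of its interval---that is, only if $b\in H_i$ with $|Z\cap H_i|=\rho_i$, or $a\notin H_i$ with $|Z\cap H_i|=|H_i|-\rho_i$ (the two mirror obstructions coming from the complement reduce to these). The non-crossing property forces the hyperplanes that are ``tight'' at $Z$ into a laminar pattern, and a short counting argument then produces an admissible pair $(a,b)$ whenever $Z\ne Z'$, except in one exceptional ``twisted'' configuration, which is resolved by a single preliminary exchange that need not decrease $|Z\setminus Z'|$---the origin of the ``$+1$''. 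Iterating yields at most $|Z\setminus Z'|+1$ exchanges, and since each non-wasted step fixes a new element of $Z'$ the process also terminates within $\rho$ steps, giving the $\min$.

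I expect the last point to be the main obstacle: showing that when \emph{several} split-hyperplane constraints are tight at $Z$ simultaneously, an exchange toward $Z'$ respecting all of them still exists. This is precisely where being split rather than an arbitrary matroid is essential---for general matroids mutually blocking constraints can deadlock the greedy process---and it is also where the sharp constant (both the ``$+1$'' and the $\min$ with $r$) must be extracted by a careful but elementary case analysis. A secondary technical burden is recording the structure theorem in exactly the ``split-hyperplane inequality'' form used above and handling the direct-sum bookkeeping in the disconnected case.
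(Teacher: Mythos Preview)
The paper does not prove this theorem: it is quoted as a known result of B\'erczi and Schwarcz \cite{Berczi2024} and is not re-derived anywhere in the text. The paper's own contribution is the complementary statement that White's conjecture holds in \emph{all} degrees for the smaller class of paving matroids, and that is proved by an entirely different mechanism (induction along stressed-hyperplane relaxations, Propositions~\ref{prop:XY_XY}, \ref{prop:XYZ_XYZ} and Theorem~\ref{thm:B}). So there is no in-paper proof to compare your proposal against.

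On its own merits, your outline is a plausible sketch of an attack on the B\'erczi--Schwarcz result. The reduction to complementary basis pairs in the minor $M'=(M/S)|_T$ of rank $\rho=r-|S|$ on $2\rho$ elements is correct, split matroids are minor-closed, and your arithmetic $|X\setminus X'|=r-|B_1\cap B'_1|$ is right (indeed it would give the slightly sharper $\min\{r-|S|,\,r-|B_1\cap B'_1|+1\}$). Using the split-hyperplane description of the base polytope is also the natural leverage point. Two caveats, though. First, the polyhedral characterization you invoke is due to Joswig and Schr\"oter, not ``Joswig--Schmitt''. Second, and more seriously, you have correctly located but not discharged the actual content of the theorem: the assertion that when several split constraints are simultaneously tight at $Z$ one can still find an exchange toward $Z'$ respecting all of them, with at most one wasted step. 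Saying that non-crossing ``forces a laminar pattern'' and that ``a short counting argument'' then produces an admissible pair is not a proof; this is precisely the step where the B\'erczi--Schwarcz argument does real work, and your proposal leaves it as an expectation rather than an argument.
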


The main contribution of our paper is to settle White's conjecture for the class of {\em paving matroids}, which are matroids whose circuits are all of size $r$ or $r+1$.

\begin{theorem} \label{thm:A}
    White's conjecture is true for paving matroids.
\end{theorem}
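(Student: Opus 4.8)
I would prove Theorem~\ref{thm:A} by induction on the number $h$ of \emph{non-trivial} hyperplanes --- those of size at least $r$ --- of the paving matroid $M$ of rank $r$ on ground set $E$. The first task is to record a relaxation operation that decreases $h$ without leaving the class of paving matroids. If $H$ is a non-trivial hyperplane of a paving matroid $M$, then since $M$ is paving every $(r-1)$-subset of $E$ is independent, so $M|_H\cong U_{r-1,|H|}$; thus $H$ is a \emph{stressed hyperplane}, and its relaxation $\tilde M$ has as bases those of $M$ together with all $r$-subsets of $H$. Using that distinct non-trivial hyperplanes of a paving matroid meet in at most $r-2$ elements, one checks that $\tilde M$ is again paving and that its non-trivial hyperplanes are precisely those of $M$ other than $H$; hence iterating relaxations on a paving matroid terminates at $U_{r,n}$ after $h$ steps. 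This settles the base case $h=0$: there $M=U_{r,n}$ is sparse paving, so White's conjecture holds by Theorem~\ref{thm:Bonin}.

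\textbf{Reduction of the inductive step.} Suppose $M$ is paving with $h\geq 1$; fix a non-trivial hyperplane $H$ and let $\tilde M$ be its relaxation, a paving matroid with $h-1$ non-trivial hyperplanes, for which White's conjecture holds by induction. Fix a degree $n$ and a multiset $\mu$, and let $G_M$ (resp.\ $G_{\tilde M}$) be the graph whose vertices are the ordered $n$-tuples of bases of $M$ (resp.\ of $\tilde M$) with multiset union $\mu$, with one edge per symmetric exchange. Since the bases of $M$ are exactly the bases of $\tilde M$ that are not $r$-subsets of $H$, one checks directly that $G_M$ is the subgraph of $G_{\tilde M}$ \emph{induced} on the $M$-tuples: if a symmetric exchange in $\tilde M$ carries one $M$-tuple to another, the four bases it involves all lie in $M$, so it is already a symmetric exchange in $M$. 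Hence White's conjecture for $M$ in degree $n$ at $\mu$ is equivalent to connectivity of that induced subgraph --- a statement vacuous unless $\mu$ is the union of some $M$-tuple, which forces $\mu$ to contain at most $n(r-1)$ elements inside $H$ --- whereas connectivity of $G_{\tilde M}$ holds by induction. Everything thus reduces to rerouting paths of $G_{\tilde M}$ around the \emph{bad} tuples: those with an entry contained in $H$, necessarily an $r$-subset of $H$.

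\textbf{Rerouting.} Here I would generalize the circuit-hyperplane relaxation argument of Han et al.~\cite{han2025white} from a single circuit-hyperplane (of size exactly $r$) to a stressed hyperplane of arbitrary size, whose local structure is $U_{r-1,|H|}$. The plan is to build a retraction $\rho$ of the vertex set of $G_{\tilde M}$ onto the set of $M$-tuples, fixing the $M$-tuples pointwise, by canonically \emph{evacuating} forbidden elements: from a bad tuple, push elements of $H$ out of each entry lying inside $H$ and into entries containing fewer than $r-1$ elements of $H$ (such entries exist, by a counting argument, whenever $\mu$ is realizable over $M$), each push being a symmetric exchange in $\tilde M$. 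Such pushes are always available: every basis meets $E\setminus H$, and an exchange along an element of such a basis lying outside $H$ removes one $H$-element from the entry inside $H$; moreover, since $M$ is paving, the only size-$r$ non-bases of $M$ are the subsets of non-trivial hyperplanes, and the uniformity of $M|_H$ makes it immaterial, up to further symmetric exchanges within $M$, which $H$-elements get moved. One then shows $\rho$ is continuous up to $M$-moves: if $u\sim v$ in $G_{\tilde M}$, then $\rho(u)$ and $\rho(v)$ are joined by a sequence of symmetric exchanges through $M$-tuples. Together with the connectivity of $G_{\tilde M}$, this forces the induced subgraph on $M$-tuples to be connected, completing the induction.

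\textbf{Main obstacle.} I expect the difficulty to reside entirely in the rerouting: pinning down a canonical evacuation (independent of the order and choice of pushes, up to $M$-valid exchanges) and establishing the continuity property. Unlike a single circuit-hyperplane, a bad entry cannot be repaired by one swap; one must manage an entire uniform sub-configuration sitting inside $H$, together with how it interacts --- through the other coordinates of the tuple --- with $E\setminus H$, all while keeping the multiset union fixed. I anticipate that this reduces to a short list of local patterns near $H$, each resolved using the uniformity of $M|_H$ and the fact that $M$ is paving, possibly invoking an auxiliary White-type statement for tuples of bases confined to a neighborhood of $H$ (for which Theorem~\ref{thm:Bonin} or a direct argument should suffice).
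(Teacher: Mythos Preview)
Your inductive scaffolding---relax a stressed hyperplane of the paving matroid, invoke White's conjecture for the relaxation $\tilde M$, and reroute the resulting $\tilde M$-exchange sequence around tuples containing a type-$0$ subset---is exactly the paper's strategy, and your observation that $G_M$ is the \emph{induced} subgraph of $G_{\tilde M}$ on $M$-tuples is correct and used implicitly throughout. Where your plan diverges from the paper, and where the gap lies, is the ``retraction plus continuity'' step. You propose to define a map $\rho$ from bad tuples to $M$-tuples and prove that every edge of $G_{\tilde M}$ maps to a $G_M$-path. The paper does \emph{not} attempt a global retraction; it works longitudinally on the given $\tilde M$-sequence in two stages. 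First (Section~5.3), it modifies the sequence so that in each fixed coordinate $i$, type-$0$ subsets never occur at two consecutive steps; this ``isolation'' already requires a nontrivial case analysis (Cases~I(a)--(b) and II(a)--(d)), using Proposition~\ref{prop:XB_XB} and Proposition~\ref{prop:XBB_XBB}. Second (Section~5.4), once every type-$0$ occurrence is isolated, the two exchanges flanking it touch at most three coordinates; this is what buys the reduction to the purely local condition~(*) in degrees $2$ and $3$ only. That reduction is the key structural idea your proposal is missing: without it, your continuity property must be verified for edges between arbitrary bad tuples (possibly with many type-$0$ entries on each side), which is strictly harder.

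Even after the reduction, the verification of~(*) is far from the ``short list of local patterns'' you anticipate, and Theorem~\ref{thm:Bonin} for sparse paving matroids is not what resolves it. The degree-$2$ and degree-$3$ cases (Propositions~\ref{prop:XY_XY} and~\ref{prop:XYZ_XYZ}) are each proved by contradiction via a double-counting argument specific to paving matroids: one assumes every candidate $M$-rerouting fails, deduces that several auxiliary $(r-1)$-subsets span distinct hyperplanes whose pairwise unions cover $E$, and then contrasts the resulting lower bound on $\sum |H_\alpha\cap H_\beta|$ with the upper bound $r-2$ of Proposition~\ref{prop:SH_intersection}. The degree-$3$ argument alone has four degenerate cases and a six-step general case. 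Your intuition that ``uniformity of $M|_H$ makes it immaterial which $H$-elements get moved'' is correct and corresponds to Lemma~\ref{lem:type1-WC}/Corollary~\ref{cor:type1-WC-bases}, but this handles only exchanges among type-$\le 1$ subsets; the hard work is controlling what happens across the boundary of $H$, and that is where the hyperplane-intersection bound, not sparse-paving combinatorics, does the job.
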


Every sparse paving matroid is paving whereas every paving matroid is split, so our result extends Theorem~\ref{thm:Bonin} and makes a step towards solving White's conjecture for split matroids (and more) completely.

The class of paving matroids, together with sparse paving matroids and split matroids, are interesting in the context of White's conjecture for several reasons. On one hand, it has been conjectured that asymptotically almost all matroids are paving (respectively, sparse paving or split), see \cite[Conjecture~1.6]{MNWW2011} and \cite[Conjecture~15.5.8]{Oxley}. Therefore, assuming this conjecture, our result confirms White's conjecture for a very large class of matroids (that properly contains all sparse paving matroids and Steiner systems). On the other hand, paving matroids can be characterized using their matroid polytopes \cite{Joswig2017}, which connects to the algebraic/geometric aspect of White's conjecture, and more generally, the modern trends in matroid theory.

\subsection{Overview of the Strategy}

Our strategy is inspired by the aforementioned work by Han et al. They proved that the validity of White's conjecture is preserved by {\em circuit-hyperplane relaxation} and its inverse operation.

\begin{theorem} \cite{han2025white}
    Let $M$ be a matroid with a circuit-hyperplane $H$ and let $\widetilde{M}$ be the relaxation of $M$ at $H$. Then White's conjecture is true for $M$ if and only if it is true for $\widetilde{M}$.
\end{theorem}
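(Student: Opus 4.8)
\textbf{Proof strategy.}
The plan is to move exchange sequences back and forth between $M$ and $\widetilde M$, the only obstruction being the extra basis $H\in\Bcal(\widetilde M)\setminus\Bcal(M)$. First I would pin down the local structure around $H$. Writing $r$ for the rank of $M$: since $H$ is a hyperplane, no independent set of size $r$ fits inside it, so every basis of $M$ meets $E\setminus H$; since $H$ is also a circuit, $M|H$ is a circuit on $r$ elements, so the bases $N$ of $M$ with $|N\cap(E\setminus H)|=1$ — call these the \emph{near-$H$ bases} — are exactly the sets $(H\setminus\{h\})\cup\{e\}$ with $h\in H$ and $e\in E\setminus H$. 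Because $\Bcal(\widetilde M)=\Bcal(M)\cup\{H\}$, every symmetric exchange among bases of $M$ is still one in $\widetilde M$, so the whole question reduces to the exchanges of $\widetilde M$ that touch $H$.

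Next I would set up a dictionary for those exchanges, all by inspecting the multi-set unions involved. (i) For a near-$H$ basis $N$, the multi-set union of $H$ and $N$ has $\{H,N\}$ as its only unordered decomposition into two bases of $\widetilde M$; hence the only symmetric exchange available to an ordered pair $(H,N)$ is the transposition with $(N,H)$. (ii) Every symmetric exchange out of a pair $\{H,F\}$ with $F$ not near-$H$ produces a pair consisting of a near-$H$ basis and a basis of $M$; dually, the number of copies of $H$ in a tuple can strictly increase, under a single step, only by a pair $\{N',F'\}$ with $N'$ near-$H$ being replaced by $\{H,F\}$ with $F$ not near-$H$. (iii) Consequently the number of coordinates equal to $H$ changes by at most one per step, and it can \emph{decrease} whenever the tuple contains a copy of $H$ together with a basis that is not near-$H$, by exchanging the two; moreover, if in a tuple with union $U$ every basis other than copies of $H$ is near-$H$, then the number of copies of $H$ equals $n-|U\cap(E\setminus H)|$ counted with multiplicity, a quantity depending only on $U$. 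Finally, if the multi-set union of two near-$H$ bases $N_1,N_2$ also decomposes as $\{H,C\}$, then it decomposes as a third pair $\{N_1',N_2'\}$ of near-$H$ bases as well, and $\{N_1,N_2\}$ and $\{N_1',N_2'\}$ are joined by a single symmetric exchange that never meets $H$.

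With the dictionary in hand the implication ``White for $M$ $\Rightarrow$ White for $\widetilde M$'' is quick: given two tuples of bases of $\widetilde M$ with equal union $U$, repeatedly exchange a copy of $H$ against a basis that is not near-$H$ as in (iii); by the counting remark this is possible until the number of copies of $H$ reaches $\max\{0,\,n-|U\cap(E\setminus H)|\}$, which depends only on $U$, so doing this to both tuples gives them the same number $k$ of copies of $H$, with all remaining bases in $\Bcal(M)$ (and near-$H$ when $k>0$). Using the transpositions of (i), slide the copies of $H$ to the first $k$ coordinates of each tuple; the complementary subtuples of $M$-bases then have equal union, so White's conjecture for $M$ connects them while the leading copies of $H$ stay fixed. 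For the converse, given two tuples of bases of $M$ — hence with no copies of $H$ — White's conjecture for $\widetilde M$ yields an exchange sequence between them in $\widetilde M$; by (iii) the number of copies of $H$ along it starts and ends at $0$ and moves by $\pm1$, so the sequence breaks into $H$-free stretches (already exchange sequences in $M$) separated by \emph{excursions}, each bracketed by a creating step $\{N',F'\}\to\{H,F\}$ and its symmetric destroying step. One then removes the excursions one at a time: by the rigidity in (i)--(ii) the tuples inside an innermost excursion are tightly constrained, and the twist in the dictionary lets one replace such an excursion by an $H$-free detour, so after finitely many replacements the entire sequence lies in $M$.

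The step I expect to be the real obstacle is exactly this last one — removing an $H$-excursion in the reverse implication — because one must bypass a stretch of the sequence that visits tuples containing $H$ \emph{without} simply re-invoking White's conjecture for $M$ on the endpoints of that stretch, which would be circular. The dictionary is designed to forbid that circularity: a copy of $H$ is never indispensable, it only ever stands in for a near-$H$ basis inside the narrow configuration of (ii), and the $H$-free decompositions competing with it are mutually reachable by single exchanges. Turning this into a clean induction — and in particular handling excursions during which several coordinates equal $H$ at the same time, which I would resolve by always acting on the frontmost such coordinate and on innermost excursions — is where the technical care will be needed.
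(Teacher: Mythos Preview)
This theorem is merely \emph{cited} in the paper (from \cite{han2025white}); the paper gives no self-contained proof of it. What the paper does prove is the generalization Theorem~\ref{thm:B} for stressed hyperplanes, which specializes to the cited statement once one verifies condition~(*) in the circuit-hyperplane case. So the relevant comparison is between your outline and the paper's machinery behind Theorem~\ref{thm:B}.

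Your forward direction (White for $M$ $\Rightarrow$ White for $\widetilde M$) is correct and is exactly the specialization of the paper's proof of Theorem~\ref{thm:B}(1): reduce the number of copies of $H$ against bases of type $\geq 2$ (your ``non-near-$H$'' bases, via what the paper calls Lemma~\ref{lem:type02}), and then either invoke White for $M$ or handle the residual all-type-$\leq 1$ situation (the paper's Lemma~\ref{lem:type1-WC}, which your counting argument with $n-|U\cap(E\setminus H)|$ also resolves).

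The reverse direction is where your proposal is incomplete, and the gap is specific. Your dictionary correctly pins down the \emph{boundary} moves of an excursion (the creating and destroying steps) and the transpositions $(H,N)\leftrightarrow(N,H)$. But inside an innermost excursion, the copy of $H$ can sit idle while \emph{arbitrary} symmetric exchanges occur among the remaining coordinates; your assertion that ``the tuples inside an innermost excursion are tightly constrained'' is not accurate beyond the single coordinate holding $H$. The dictionary item about the third decomposition $\{N_1',N_2'\}$ handles only the degree-$2$ situation where the creating and destroying steps share the same two coordinates with nothing in between. You give no mechanism for replacing a spectator step $(H,Y,Z,\dots)\to(H,Y',Z',\dots)$ by an $H$-free detour, and one cannot simply freeze the $H$-coordinate at its pre-excursion value $N_1$ and replay the middle steps, because the companion coordinate of the creating step would then be wrong.

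This is precisely the content the paper isolates: the spectator case is handled by Proposition~\ref{prop:XBB_XBB}, which in turn relies on degree-$2$ White for $M$ (bootstrapped separately via~(*)), and the paper's induction is organized position-by-position (on $|\Rcal|$ and on $\Hcal_1$) rather than on the total $H$-count. The paper also remarks that \cite{han2025white} inducts ``on the number of occurrences'' of $H$, which is closer in spirit to your excursion picture; either way, the missing ingredient in your outline is an explicit local replacement for the spectator steps. For a circuit-hyperplane this replacement is genuinely easier than in the stressed case (there is only the single non-basis $H$, and a type-$\geq 2$ basis is always available by a total-type count), so your plan is completable---but the work you flag as ``technical care'' is not just bookkeeping: it is the substantive step.
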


Since every sparse paving matroid is related to a uniform matroid by a sequence of circuit-hyperplane relaxations, and White's conjecture is trivially true for the latter, they gave an alternative proof of Theorem~\ref{thm:Bonin}.

The operation of circuit-hyperplane relaxation can be generalized to {\em stressed hyperplane relaxation} (relaxation for short), which was first considered by Bonin and de Mier \cite{bonin2008} before further developed by Ferroni, Nasr, and Vecchi \cite{ferroni2023stressed}. Every paving matroid is related to a uniform matroid by a sequence of relaxations, and our argument is by induction on the length of such a relaxation sequence. Part of our results remain true for arbitrary relaxations (regardless of $M$ is paving or not), particularly for higher degree. In the next theorem, we consider the condition 
\begin{center}
    (*): {\em Any two tuples (of bases of $M$) of degree 2\! or\! 3 that are connected by a 2-step exchange sequence in $\widetilde{M}$ can be connected by an exchange sequence in $M$.}
\end{center}

\begin{theorem} \label{thm:B}
    Let $M$ be a matroid with a stressed hyperplane $H$ and let $\widetilde{M}$ be the relaxation of $M$ at $H$. Then the followings are true:
    \begin{enumerate}
        \item if White's conjecture holds for $M$, then White's conjecture holds for $\widetilde{M}$;
        \item if White's conjecture holds for $\widetilde{M}$ and {\rm (*)} holds for $M$, then White's conjecture holds for $M$.
    \end{enumerate}
\end{theorem}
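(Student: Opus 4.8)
The plan is to use the explicit description of the bases of $\widetilde M$ and to reduce everything, via symmetric exchanges in $\widetilde M$, to configurations governed either by White's conjecture for $M$ or by White's conjecture for the uniform matroid $U_{r,|H|}\cong\widetilde M|_H$ (which is known). Recall that $H$ is a flat of $M$ of rank $r-1$ with $M|_H\cong U_{r-1,|H|}$, so the bases of $\widetilde M$ are exactly the bases of $M$ together with all $r$-subsets of $H$ (the \emph{new bases}); a basis of $\widetilde M$ is new iff it is contained in $H$, and a basis of $\widetilde M$ not contained in $H$ is automatically a basis of $M$. For a tuple $\mathbf B=(B_1,\dots,B_n)$ of $\widetilde M$-bases write $D(\mathbf B)$ for the number of elements of the multiset union lying outside $H$; this is invariant under symmetric exchanges, every $M$-basis contributes at least $1$ to it and every new basis contributes $0$. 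The elementary engine, generalizing the circuit-hyperplane case of Han et al.\ with no essential change, is a \emph{reduction step}: if a tuple of $\widetilde M$-bases contains a new basis $N$ together with an $M$-basis $B_i$ having $|B_i\setminus H|\ge 2$, then a single symmetric exchange in $\widetilde M$ along an element of $B_i\setminus H$ converts $N$ into an $M$-basis while $B_i$ stays an $M$-basis, so the number of new bases strictly drops. Iterating, every tuple of $\widetilde M$-bases is connected in $\widetilde M$ to one with exactly $\max\{0,n-D(\mathbf B)\}$ new bases, where the surviving $M$-bases each meet $E\setminus H$ in a single element when $D(\mathbf B)<n$.

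For part (1), given tuples $\mathbf B,\mathbf B'$ of $\widetilde M$-bases with common multiset union $U$, apply the reduction step to both. If $D\ge n$ the results are tuples of $M$-bases with union $U$ and White's conjecture for $M$ finishes; if $D=0$ the results are tuples of $r$-subsets of $H$, i.e.\ tuples of bases of $U_{r,|H|}$, and White's conjecture there finishes. The interesting range is $0<D<n$, where each reduced tuple splits into an \emph{old part} of $D$ bases of $M$ with one element outside $H$ and a \emph{new part} of $n-D$ $r$-subsets of $H$; the difficulty is that the induced splitting $U=U_{\mathrm{old}}\uplus U_{\mathrm{new}}$ on $H$ is not forced by $U$. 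I would canonicalize in three stages: (a) a single exchange between an old basis (using its unique element outside $H$) and a new basis swaps their roles, so all new bases can be driven into a fixed block of positions in both tuples; (b) since $M|_H$ and $\widetilde M|_H$ are uniform, exchanges between an old and a new basis transfer one element of $H$ from the new side to the old side (and one back), and one checks that the set of admissible splittings — correct cardinalities, with each multiplicity $\le D$ on the old side and $\le n-D$ on the new side — is connected under such transfers, so both tuples can be brought to the same splitting; (c) with $U_{\mathrm{old}}$ and the positions now matching, apply White's conjecture for $M$ to the old block and for $U_{r,|H|}$ to the new block. Here one uses repeatedly that every set $\{a\}\cup P$ with $a\notin H$ and $P$ an $(r-1)$-subset of $H$ is already a basis of $M$ (because $\cl(P)=H$), which keeps the transfer moves inside the honest base set; making (b) fully rigorous — in particular realizing a prescribed single-element transfer by an actual symmetric exchange, rather than one whose partner element we cannot choose — requires care and is where White's conjecture for $U_{r,|H|}$ is invoked again to pre-arrange the new part.

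For part (2), let $\mathbf B,\mathbf B'$ be tuples of $M$-bases with common union $U$; then $D\ge n$ automatically. White's conjecture for $\widetilde M$ gives an exchange sequence $\mathbf B=\mathbf C^{(0)}\to\cdots\to\mathbf C^{(\ell)}=\mathbf B'$ in $\widetilde M$, but intermediate tuples may be \emph{bad}, i.e.\ contain new bases. The goal is to rewrite the sequence, fixing its endpoints, so that every maximal run of bad tuples has length one. For such a run $\mathbf C^{(t)}\to\mathbf C^{(t+1)}\to\mathbf C^{(t+2)}$ with middle tuple bad and ends good, a short counting argument (a single exchange cannot undo two new bases at once) shows the creation step makes exactly one position new, hence only three positions are involved; restricting the three tuples to those positions yields a degree $2$ or $3$ pair of tuples of $M$-bases joined by a $2$-step exchange sequence in $\widetilde M$, which (*) replaces by an exchange sequence in $M$, and extending this replacement by the identity on the other positions strictly decreases the number of bad tuples. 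Concatenating over all runs produces an exchange sequence in $M$, proving White's conjecture for $M$.

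The main obstacle is precisely the reduction to bad runs of length one. A new basis, once created, can survive for many steps, during which further new bases may appear and exchanges may occur entirely among new bases. To shorten a long bad run I would (i) normalize, via White's conjecture for $U_{r,|H|}$, all steps taking place inside $H$; (ii) apply the part-(1) reduction step to push new bases back to $M$-bases at the earliest opportunity; and (iii) commute exchanges supported on disjoint position sets so that each creation of a new basis is immediately followed by its destruction. Arranging that this rewriting terminates — e.g.\ by a double induction on the length of the sequence and the total number of new bases occurring along it, with a potential that the local moves provably decrease — is the technical heart of Theorem~\ref{thm:B}(2), and is exactly what condition (*) is designed to bottom out.
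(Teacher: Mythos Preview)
Your part~(1) is essentially correct and close to the paper's argument: your reduction step is Lemma~\ref{lem:type02}, and your cases $D\ge n$ and $D=0$ are the paper's Cases~I and~II. For the mixed range $0<D<n$ the paper bypasses your three-stage canonicalization by proving directly (Lemma~\ref{lem:type1-WC}) that any two tuples of type~$\le 1$ subsets with equal multi-set union are connected by exchanges that stay in type~$\le 1$; this absorbs your (a)--(c) into one lemma and avoids the connectivity-of-splittings argument you yourself flag as needing care.

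For part~(2), your endgame is right --- once every bad run has length one, at most three positions are involved and (*) applies --- but the reduction to length-one bad runs is a genuine gap. Commutation~(iii) only works for exchanges on disjoint position sets; the reduction step~(ii) needs a type~$\ge 2$ basis, which may be absent when all $M$-bases present are type~$1$; and inserting a reduction exchange into the middle of the sequence breaks the exchange-sequence property on both sides, with no repair mechanism given. The paper's induction is different and is what makes this tractable: instead of shortening all bad runs globally, it picks one position $i\in\Rcal(\mathfrak B)$ and works to remove $i$ from $\Rcal$ without enlarging it, so the induction is on $|\Rcal|$ and other positions are \emph{allowed to remain bad} throughout. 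The substantive work is making the type-$0$ occurrences in position~$i$ pairwise non-adjacent, and this relies on two auxiliary results your sketch lacks: Proposition~\ref{prop:XB_XB} (a single $\widetilde M$-exchange between two type-$0$/type-$\ge 2$ pairs can be replaced by a longer sequence whose interior lies in $M$) and Proposition~\ref{prop:XBB_XBB} (if $X$ is type~$0$ and the exchange occurs between $Y,Z$ of total type $\ge 3$ not involving $X$, then $(X,Y,Z)\to(X,Y',Z')$ can be rerouted so that the first position passes only through $M$-bases), together with a pigeonhole borrowing of a type~$\ge 2$ basis in the low-type subcases. Only after position~$i$ has isolated bad steps is (*) invoked. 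Note also that Proposition~\ref{prop:XBB_XBB} already uses White's conjecture in degree~$2$ for $M$, so the paper establishes the degree-$2$ case separately (Theorem~\ref{thm:B2}) before running the general induction --- a dependency your outline does not address.
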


This theorem extends \cite[Theorem~2.13]{han2025white}, but the proof is more complicated as there are now many bases of $\widetilde{M}$ not in $M$, so the original induction (on the number of occurrences of these bases in an exchange sequence) and degree reduction procedure there are not enough.
Instead, we apply a more careful induction and modification procedure to the exchange sequence so to create the 2-step sequences that (*) can be applied on; we put no effort on optimizing $\ell$ as in Theorem~\ref{thm:Berczi_Schwarcz}. 

We prove the validity of (*) for paving matroids\footnote{As mentioned above, the degree 2 case is true more generally for split matroids, but we include our own proof here as a simpler illustration of our argument.} in Proposition~\ref{prop:XY_XY} and~\ref{prop:XYZ_XYZ}. Same as the above, their proofs are more difficult than their counterpart in \cite{han2025white}, as we need substantial work to make sure a basis of $\widetilde{M}$ we consider is also a basis of $M$ (e.g., in a circuit-hyperplane relaxation, performing an arbitrary basis exchange in $\widetilde{M}$ on the unique new basis produces a basis of $M$, but in our case more careful choice is needed). We sketch our high level strategy here.

\begin{enumerate}
    \item We may assume $E$ is the union of the bases, and by the 2-step exchange sequence in the relaxation, at most $4$ elements are being exchanged;
    \item we may assume that various candidate exchange sequences only exchanging these elements are not valid in the original matroid $M$, which implies that certain subsets considered in these sequences are circuits and they span distinct (stressed) hyperplanes $H$'s of $M$;
    \item we may assume certain pairs of these hyperplanes each covers $E$, or else we may use an uncovered element to produce a valid exchange sequence in $M$;
    \item the sum of $|H\cap H'|$'s over all pairs is now simultaneously small because of the property of stressed hyperplanes (Proposition~\ref{prop:SH_intersection}) and large because of the covering assumption, which yields a contradiction.
\end{enumerate}

\subsection*{Acknowledgement}
The second author thanks Mateusz Micha\l ek for suggesting this problem, as well as the helpful discussion and encouragement; he also thanks Spencer Backman for inspiring his interest on White's conjecture, and Joseph Bonin for pointing out \cite{bonin2008}. Both authors are partially supported by the Ministry of Science and Technology of Taiwan project MOST 114-2115-M-A49-015-MY3.

\section{Background}

We assume basic familiarity of matroid theory from the reader, a standard reference on the subject is \cite{Oxley}.
Unless otherwise specified, $M$ denotes a matroid of rank $r$ on ground set $E$. The set of bases of $M$ is denoted by $\Bcal$, and the closure operation is denoted by $\cl$. The fundamental circuit of $B\in\Bcal$ with respect to an element $e\not\in B$ is denoted by $C(B,e)$; we always take fundamental circuit in $M$.

\begin{definition}
    A matroid $M$ is {\em paving} if every circuit of $M$ is of size $r$ or $r+1$. Equivalently, any subset of size $\leq r-1$ is independent.
\end{definition}

The following is straightforward from the definition; in fact, it is not difficult to see that $M$ is paving if and only if it is $U_{2,2}\oplus U_{0,1}$-free.

\begin{proposition} \label{prop:minor-closed}
    The class of paving matroids is minor-closed.
\end{proposition}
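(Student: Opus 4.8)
The plan is to verify directly that both a single-element deletion and a single-element contraction of a paving matroid are again paving; since every minor is obtained by composing such operations, the proposition then follows by induction on the number of deleted and contracted elements. Throughout I use the characterization in the definition: a matroid of rank $r$ is paving if and only if every subset of size at most $r-1$ is independent.

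For deletion, let $e\in E$ and put $r'=\operatorname{rank}(M\setminus e)$, so $r'\leq r$. Since $M$ and $M\setminus e$ have the same subsets of $E\setminus\{e\}$ and agree on which of them are independent, any $S\subseteq E\setminus\{e\}$ with $|S|\leq r'-1\leq r-1$ is independent in $M$ by the paving hypothesis, hence independent in $M\setminus e$. Thus $M\setminus e$ is paving. (If $e$ is a loop, then $M/e=M\setminus e$, so the contraction case is also handled.)

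For contraction, let $e\in E$ be a non-loop, so $\operatorname{rank}(M/e)=r-1$. Recall that for $S\subseteq E\setminus\{e\}$, the set $S$ is independent in $M/e$ if and only if $S\cup\{e\}$ is independent in $M$. Given such an $S$ with $|S|\leq (r-1)-1=r-2$, we have $|S\cup\{e\}|\leq r-1$, so $S\cup\{e\}$ is independent in $M$ by the paving hypothesis; hence $S$ is independent in $M/e$, and $M/e$ is paving.

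There is no genuine obstacle here; the only point needing a moment's care is the bookkeeping of how the rank changes — it drops by at most one under deletion and by exactly one under contraction of a non-loop — together with the extra element in $S\cup\{e\}$ in the contraction case, which is precisely absorbed by the one-unit drop in $r$. Iterating the two cases shows every minor of a paving matroid is paving. (Alternatively, the conclusion is immediate from the remark above that paving matroids are exactly the $U_{2,2}\oplus U_{0,1}$-free matroids, since any class defined by excluded minors is minor-closed.)
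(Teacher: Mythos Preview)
Your proof is correct. The paper does not actually write out a proof of this proposition; it simply asserts that the statement is ``straightforward from the definition'' and remarks that paving matroids are exactly the $U_{2,2}\oplus U_{0,1}$-free matroids. Your direct verification via single-element deletions and contractions (and your parenthetical mention of the excluded-minor characterization) is precisely the kind of argument the paper has in mind, just made explicit.
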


\begin{definition} \label{def:SHP}
    A hyperplane $H$ of $M$ is {\em stressed} if every subset of size $r$ is circuit.
\end{definition}

\begin{theorem} \cite{bonin2008, ferroni2023stressed} \label{thm:SHR}
    Let $H$ be a stressed hyperplane of $M$. Then 
    $$\Bcal\cup\{S: S\subset H, |S|=r\}$$
    is the collection of bases of a matroid $\widetilde{M}$, known as the {\em relaxation} of $M$ at $H$.
\end{theorem}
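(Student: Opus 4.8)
The plan is to verify the basis axioms directly for the proposed family $\widetilde{\Bcal}:=\Bcal\cup\mathcal{S}$, where $\mathcal{S}:=\{S\subseteq H:|S|=r\}$. This family is nonempty (it contains $\Bcal$) and every member has size $r$, so it remains to establish the exchange property: for all $B_1,B_2\in\widetilde{\Bcal}$ and $x\in B_1\setminus B_2$ there is $y\in B_2\setminus B_1$ with $(B_1\setminus\{x\})\cup\{y\}\in\widetilde{\Bcal}$. (Note that whenever such an $x$ exists we have $B_1\neq B_2$, hence $B_2\setminus B_1\neq\emptyset$ since all members have the same size.) Two observations will be used repeatedly. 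First, since $H$ is a hyperplane it has rank $r-1$, so no member of $\mathcal{S}$ is independent in $M$; in particular $\mathcal{S}\cap\Bcal=\emptyset$. Second, since $H$ is stressed, every $S\in\mathcal{S}$ is a circuit of $M$, so for each $s\in S$ the set $S\setminus\{s\}$ is independent of size $r-1$ and $\cl(S\setminus\{s\})\subseteq\cl(H)=H$.

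Next I would run a case analysis according to whether $B_1,B_2$ lie in $\Bcal$ or in $\mathcal{S}$. If $B_1,B_2\in\Bcal$, the exchange property of $M$ supplies a $y$ with $(B_1\setminus\{x\})\cup\{y\}\in\Bcal$. If $B_1,B_2\in\mathcal{S}$, then $B_1\cup B_2\subseteq H$, so any $y\in B_2\setminus B_1$ gives an $r$-subset $(B_1\setminus\{x\})\cup\{y\}$ of $H$, which lies in $\mathcal{S}$. The two mixed cases carry the content. Suppose $B_1\in\Bcal$ and $B_2\in\mathcal{S}$. If $B_1\setminus\{x\}\subseteq H$, then for any $y\in B_2\setminus B_1\subseteq H$ the set $(B_1\setminus\{x\})\cup\{y\}$ is an $r$-subset of $H$, so lies in $\mathcal{S}$. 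Otherwise $F:=\cl(B_1\setminus\{x\})$ is a hyperplane of $M$ different from $H$ (it contains an element outside $H$), so the flat $F\cap H$ has rank at most $r-2$; since $B_2\subseteq H$ is a circuit it has rank $r-1$, hence cannot be contained in $F$, so we may pick $y\in B_2\setminus F$. Then $y\notin B_1$ (as $B_1\setminus\{x\}\subseteq F$ while $y\notin F$, and $y\in B_2$ while $x\notin B_2$), and $(B_1\setminus\{x\})\cup\{y\}$ is independent of size $r$, hence a basis of $M$. Finally, suppose $B_1\in\mathcal{S}$ and $B_2\in\Bcal$. If $B_2$ meets $H\setminus B_1$, pick $y$ there and obtain a member of $\mathcal{S}$ as before. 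Otherwise $B_2\cap H\subseteq B_1$, so $B_2\setminus B_1=B_2\setminus H$, which is nonempty because the basis $B_2$ is not contained in the hyperplane $H$; for $y\in B_2\setminus H$ we have $y\notin H\supseteq\cl(B_1\setminus\{x\})$, so $(B_1\setminus\{x\})\cup\{y\}$ is again independent of size $r$, a basis of $M$.

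I expect the main obstacle to be precisely the two mixed cases: there one must certify that the exchanged set $(B_1\setminus\{x\})\cup\{y\}$ is a genuine basis of $M$, not merely an $r$-subset of $H$, and this is where the hypothesis that $H$ is \emph{stressed} (rather than an arbitrary hyperplane) is essential. It guarantees that every $r$-subset of $H$ is a circuit and therefore has rank $r-1$, which prevents $B_2$ from being swallowed by the rank-$(r-2)$ flat $F\cap H$ and lets us locate the new element $y$ outside $F=\cl(B_1\setminus\{x\})$. When $H$ is a circuit-hyperplane (the case $|H|=r$, so $\mathcal{S}=\{H\}$), these arguments degenerate to the classical relaxation proof; the only genuinely new point is running them uniformly over all $r$-subsets of $H$ at once.
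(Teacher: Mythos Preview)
Your argument is correct. The paper itself does not supply a proof of this theorem---it is quoted from \cite{bonin2008, ferroni2023stressed}---so there is no in-paper proof to compare against. Your case analysis verifying the basis exchange axiom directly is sound: in the two mixed cases you correctly exploit that $H$ is stressed to ensure (i) each $B_2\in\mathcal{S}$ has rank $r-1$ and hence cannot lie in the rank-$(r-2)$ flat $F\cap H$, and (ii) each $B_1\setminus\{x\}$ with $B_1\in\mathcal{S}$ is independent, so adjoining any $y\notin H$ yields a basis of $M$. This is essentially how the cited references proceed as well.
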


A circuit-hyperplane relaxation is thus a special case of this operation where the stressed hyperplane $H$ is just a circuit. We only consider relaxation at a stressed hyperplane of size $\geq r$, so $M\neq\widetilde{M}$ (in particular, $M$ is not uniform).

The importance of stressed hyperplanes in the study of paving matroids is the following crucial theorem.

\begin{theorem} \cite{ferroni2023stressed} \label{thm:PM_SHR}
    Every hyperplane of a paving matroid $M$ is stressed. Moreover, the relaxation of $M$ at any of its stressed hyperplanes is again paving. Hence, every paving matroid can be related to the uniform matroid $U_{r,E}$ by a sequence of relaxations.
\end{theorem}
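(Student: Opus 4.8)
The plan is to establish the three assertions in turn; the third will follow from a short induction once the first two are in hand.

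\emph{Every hyperplane of $M$ is stressed.} Let $H$ be a hyperplane of the paving matroid $M$, so $H$ is a flat of rank $r-1$, and let $S\subseteq H$ with $|S|=r$. Since $\operatorname{rank}(S)\leq\operatorname{rank}(H)=r-1<|S|$, the set $S$ is dependent and hence contains a circuit $C$. As $M$ is paving, every circuit has size at least $r$, so $r\leq|C|\leq|S|=r$ and therefore $C=S$; thus every $r$-subset of $H$ is a circuit, that is, $H$ is stressed. (If $|H|<r$ there is nothing to check; we only ever relax at hyperplanes of size $\geq r$, for which the above has content.)

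\emph{The relaxation $\widetilde{M}$ at a stressed hyperplane is again paving.} The key point is that relaxation only enlarges the family of independent sets: any independent set of $M$ lies in some $B\in\Bcal$, and $B$ is also a basis of $\widetilde{M}$ by Theorem~\ref{thm:SHR}. Now $\widetilde{M}$ has rank $r$ since all its bases have size $r$, and since $M$ is paving every subset of $E$ of size at most $r-1$ is independent in $M$, hence independent in $\widetilde{M}$. This is exactly the condition for $\widetilde{M}$ to be paving.

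\emph{Every paving matroid is related to $U_{r,E}$ by relaxations.} I would induct on $d(M):=\binom{|E|}{r}-|\Bcal|$, the number of dependent $r$-subsets of $E$. If $d(M)=0$ then every $r$-subset is a basis, so $M=U_{r,E}$ and the empty sequence suffices. Otherwise pick a dependent $r$-subset $S$; as $M$ is paving, $\operatorname{rank}(S)=r-1$, so $H:=\cl(S)$ is a hyperplane with $|H|\geq|S|=r$. By the first assertion $H$ is stressed, so we may relax $M$ at $H$; by the second assertion the result $\widetilde{M}$ is again paving. The bases of $\widetilde{M}$ include the $\binom{|H|}{r}\geq 1$ subsets of $H$ of size $r$, each of which was dependent in $M$ (because $\operatorname{rank}(H)=r-1$), so $d(\widetilde{M})<d(M)$. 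Applying the inductive hypothesis to $\widetilde{M}$ and prepending the relaxation $M\to\widetilde{M}$ produces the desired sequence for $M$.

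The one step that is not purely mechanical, and hence the main thing to verify carefully, is the termination of this induction: one must know that a non-uniform paving matroid always has a stressed hyperplane of size $\geq r$ available, which is exactly the observation that a dependent $r$-set has rank $r-1$ in a paving matroid and so spans a hyperplane containing it. Everything else is immediate from the defining property of paving matroids (no circuit of size $<r$) together with the explicit description of the bases of $\widetilde{M}$ in Theorem~\ref{thm:SHR}.
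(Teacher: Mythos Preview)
Your proof is correct. The paper does not supply its own proof of this theorem---it is quoted as a result of Ferroni, Nasr, and Vecchi \cite{ferroni2023stressed}---so there is nothing to compare against; your argument is the standard elementary one and matches the spirit of that reference.
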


We collect a few simple properties of stressed hyperplanes and paving matroids.

\begin{proposition} \label{prop:SH_intersection}
    Let $H,H'$ be two distinct hyperplanes in a paving matroid $M$. Then $|H\cap H'|\leq r-2$.
\end{proposition}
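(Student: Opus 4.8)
The plan is to argue by contradiction, and the one place where the paving hypothesis enters is the observation that in a paving matroid every subset of size at most $r-1$ is independent, hence has rank equal to its cardinality.

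So suppose, for contradiction, that $|H\cap H'|\geq r-1$. Since $H$ and $H'$ are distinct hyperplanes, both have rank $r-1$. Choose any subset $S\subseteq H\cap H'$ with $|S|=r-1$. Because $M$ is paving, $S$ is independent, so $\operatorname{rk}(S)=r-1$, and therefore $\cl(S)$ is a flat of rank $r-1$, i.e.\ a hyperplane of $M$.

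Now I would pass to closures: since $S\subseteq H$ and $H$ is a flat, $\cl(S)\subseteq\cl(H)=H$; likewise $\cl(S)\subseteq H'$. But $\cl(S)$ and $H$ are both flats of rank $r-1$, and a proper containment of flats strictly decreases the rank, so $\cl(S)=H$, and by the same reasoning $\cl(S)=H'$. Hence $H=H'$, contradicting the assumption that the two hyperplanes are distinct. Therefore $|H\cap H'|\leq r-2$.

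I do not expect any genuine obstacle here; the argument is short. The only subtlety to flag is that the paving hypothesis is used in an essential (if brief) way: it is precisely what guarantees that an $(r-1)$-element subset of $H\cap H'$ already has full rank $r-1$ — a step that fails for general matroids, where $H\cap H'$ could be large but low-rank.
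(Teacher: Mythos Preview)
Your proof is correct and essentially identical to the paper's own argument: both observe that if $|H\cap H'|\geq r-1$ then (by the paving hypothesis) $H\cap H'$ has rank $r-1$, so its closure is a hyperplane, forcing $H=H'$. The paper compresses this into a single sentence, while you spell out the closure-containment step explicitly, but the content is the same.
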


\begin{proof}
    If $|H\cap H'|\geq r-1$, then $H\cap H'$ is of rank at least (indeed, exactly) $r-1$, which uniquely determines its closure as a hyperplane.
\end{proof}

\begin{proposition} \label{prop:type1}
    Let $H$ be a stressed hyperplane, $S\subset H$ be a subset of size $r-1$ and $x\not\in H$. Then $S\cup \{x\}$ is a basis of $M$.
\end{proposition}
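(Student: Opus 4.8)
The plan is to exhibit $S\cup\{x\}$ directly as an independent set of size $r=\operatorname{rk}(M)$, which then must be a basis. There are two things to verify: that $S$ itself is independent, and that $x$ lies outside $\cl(S)$.

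For the first, I would use that a relaxation is only performed at a stressed hyperplane of size $\geq r$, so there is an element $y\in H\setminus S$. Then $S\cup\{y\}\subseteq H$ has size $r$, so by Definition~\ref{def:SHP} it is a circuit of $M$; being a proper subset of a circuit, $S$ is independent, and hence has rank $r-1$. (When $M$ is paving this step is immediate since $|S|=r-1$, but the statement as given does not assume $M$ paving.) For the second, note that $S\subseteq H$ and that $H$, being a hyperplane, is a flat; monotonicity of closure then gives $\cl(S)\subseteq\cl(H)=H$, and since $x\notin H$ we conclude $x\notin\cl(S)$.

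Combining the two, $S\cup\{x\}$ is obtained from the independent set $S$ by adjoining an element outside its closure, hence is independent; moreover $|S\cup\{x\}|=r$ because $x\notin H\supseteq S$. An independent set of size $\operatorname{rk}(M)$ is a basis, which finishes the argument. The proof is short, and the only delicate point is the independence of $S$: this is precisely where the hypothesis that $H$ is \emph{stressed}, rather than merely a hyperplane, is used — for a general hyperplane an $(r-1)$-element subset could have rank $r-2$.
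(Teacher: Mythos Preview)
Your proof is correct and follows essentially the same route as the paper's: show $S$ is independent of rank $r-1$, observe $x\notin\cl(S)$ via $\cl(S)\subseteq H$, and conclude. The paper's version is terser---it simply asserts that $S$ is independent and that its closure is $H$---whereas you supply the justification for independence via the stressed hypothesis (picking $y\in H\setminus S$ so that $S\cup\{y\}$ is a circuit). One small remark: your argument for independence invokes $|H|\ge r$, which is not literally part of the proposition's hypotheses; this is harmless, since the paper adopts that convention globally and the residual case $|H|=r-1$ forces $S=H$, which is independent because a hyperplane of size $r-1$ has rank equal to its size.
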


\begin{proof}
    $S$ is an independent set of rank $r-1$, and $x$ is an element not in its closure $H$, so their union is a rank $r$ independent set, that is, a basis.
\end{proof}

\subsection{Convention of Notation}

We often write $X+ x_1\ldots x_k-y_1\ldots y_l$ or $X-y_1\ldots y_l+x_1\ldots x_k$ for $(X\cup\{x_1,\ldots,x_k\})\setminus\{y_1,\ldots,y_l\}$. Here $k,l$ are always small enough that we can list out the elements explicitly; moreover, we guarantee $y_1,\ldots,y_l$ are elements of $X$ whereas $x_1,\ldots,x_k$ are not, with the exception that $x_i=y_j$ (which we indicate the possibility explicitly). We use $\uplus$ for multi-set union.
A symmetric exchange from $(A,B)$ to $(A':=A-a+b,B':=B-b+a)$ is written as
$$
\begin{array}{c c c}
   A   & \swap{a}{b} & A'\\
   B   & \swap{b}{a} & B'
\end{array}.
$$

We denote by $\Bcal(H,x):=\{S\cup\{x\}:S\subset H, |S|=r-1\}$ the collection of bases in Proposition~\ref{prop:type1}. We often verify a subset $B$ in an exchange sequence is a basis by simply stating ``$B\in\Bcal(H,x)$'', if we believe it is clear to the reader that $B,H,x$ satisfy the assumption in the proposition.

When we work with a matroid with a single distinguished stressed hyperplane $H$, we often further omit $H$ and write $B\in\Bcal(x)$. In which case, we say a size $r$ subset $S$ is of type $|S\setminus H|$. By definition and Proposition~\ref{prop:type1}, a type 0 subset is a basis of the relaxation $\widetilde{M}$ but not $M$, a type 1 subset is necessarily a basis of both, and we only consider type $\geq 2$ subsets that are bases of $M$ (hence $\widetilde{M}$).

\subsection{Two Lemmas on Types}

We state a lemma on the exchange property of type 0 and type 1 subsets. As a bonus, the mild modification where we take $H=E$ in the proof justifies the claim that White's conjecture is trivial for uniform matroids.

\begin{lemma}\label{lem:type1-WC}
    Let $M$ be a matroid with a stressed hyperplane $H$.  
    Let $(B_1,\dots,B_n)$ and $(B'_1,\dots,B'_n)$ be two tuples of type $\leq 1$ subsets with the same multi-set union.
    Then there exists a sequence of symmetric exchanges transforming $(B_1,\dots,B_n)$ into $(B'_1,\dots,B'_n)$ such that all intermediate subsets are of type $\leq 1$.
\end{lemma}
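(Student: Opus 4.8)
The plan is to split each type $\le 1$ basis $B$ into its \emph{$H$-part} $B\cap H$ and its \emph{outside part} $B\cap(E\setminus H)$ — the latter is empty when $B$ is of type $0$, and a singleton $\{x\}$ with $x\notin H$ when $B$ is of type $1$ — and to transform the two tuples in two stages: first aligning the outside parts slot by slot, then correcting the $H$-parts. Throughout I use only symmetric exchanges of three special shapes, each of which I check is legal in $\widetilde M$ and keeps all subsets of type $\le 1$: (i) given a type $1$ basis $S\cup\{x\}$, a type $0$ basis $T$, and some $\beta\in T\setminus S$, replace them by $S\cup\{\beta\}$ (type $0$) and $(T\setminus\{\beta\})\cup\{x\}\in\Bcal(x)$ (type $1$); (ii) given type $1$ bases $S\cup\{x\}$ and $S'\cup\{y\}$ with $x\neq y$, swap $x$ and $y$, obtaining $S\cup\{y\}\in\Bcal(y)$ and $S'\cup\{x\}\in\Bcal(x)$; (iii) given two type $\le 1$ bases with equal outside parts, an element of $H$ lying in the first but not the second, and an element of $H$ lying in the second but not the first, swap the two elements. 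Legality of (i)--(iii) is immediate from Proposition~\ref{prop:type1} and Theorem~\ref{thm:SHR}; for (i) and (ii) the set differences used are nonempty by a size count, whereas in (iii) the two elements may fail to exist, a contingency I handle explicitly in Stage 2.

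\emph{Stage 1.} Since each slot of either tuple contributes at most one outside element, the total multiplicity $m$ of outside elements in their common multiset union is the same for both tuples, and exactly $m$ slots of each tuple have type $1$. First, while the set of type-$1$ slots of the current tuple differs from that of $(B'_1,\dots,B'_n)$, a counting argument yields a slot that is type $1$ but whose target is type $0$ together with a slot that is type $0$ but whose target is type $1$; applying (i) to this pair corrects both types and strictly shrinks the symmetric difference of the two sets of type-$1$ slots. Once these sets agree, for any slot whose outside element is still wrong a counting argument yields another slot currently holding the desired outside element and itself still wrong, and one application of (ii) makes the first slot correct, strictly increasing the number of correct slots. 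After Stage 1 the current $i$-th basis and $B'_i$ have the same outside part, hence $H$-parts of the same size, for every $i$.

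\emph{Stage 2.} Only $H$-elements remain to be moved, and by (iii) any single swap of an $H$-element between two slots is legal whenever the needed elements exist. I induct on $\Phi:=\sum_i |(B_i\cap H)\setminus(B'_i\cap H)|$; if $\Phi=0$ we are done. Otherwise pick $i$ and $a\in (B_i\cap H)\setminus(B'_i\cap H)$, and by multiset balancing pick $j$ with $a\in (B'_j\cap H)\setminus(B_j\cap H)$. A short computation shows that moving $a$ from slot $i$ to slot $j$ via (iii) is possible and strictly decreases $\Phi$ unless $(B_j\cap H)\setminus(B_i\cap H)\subseteq (B'_j\cap H)\setminus(B'_i\cap H)$; symmetrically, performing the analogous move on $(B'_1,\dots,B'_n)$ — legitimate since a symmetric exchange is undone by a symmetric exchange — strictly decreases $\Phi$ unless $(B'_i\cap H)\setminus(B'_j\cap H)\subseteq (B_i\cap H)\setminus(B_j\cap H)$. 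If both inclusions held, comparing cardinalities and using the size equalities $|B_i\cap H|=|B'_i\cap H|$ and $|B_j\cap H|=|B'_j\cap H|$ secured in Stage 1 forces both inclusions to be equalities; but then $a\in (B_i\cap H)\setminus(B_j\cap H)=(B'_i\cap H)\setminus(B'_j\cap H)\subseteq B'_i$, contradicting $a\notin B'_i$. Hence some legal move strictly decreases $\Phi$, and the induction goes through. The degenerate situation with no outside elements — formally $H=E$, as in the remark preceding the statement — is exactly Stage 2 run on its own, and recovers White's conjecture for uniform matroids.

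The validity checks for (i)--(iii) and the bookkeeping of Stage 1 are routine. The only real content is the progress step of the Stage 2 induction, i.e.\ excluding the case where neither candidate move helps; this is precisely where the equality of $H$-part sizes, arranged by carrying out Stage 1 first, is indispensable, and it is the step I expect to require the most care.
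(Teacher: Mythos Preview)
Your proof is correct and follows the same two-stage skeleton as the paper: first align the parts outside $H$, then the $H$-parts. Stage~1 is essentially the paper's argument, reorganized. In Stage~2 you take a genuinely different route: when the direct move from slot $i$ to slot $j$ is blocked, the paper locates a \emph{third} index $l$ (using that $|B_k\cap H|$ and $|B_l\cap H|$ differ by at most one) and performs a two-step exchange through it; you instead exploit the reversibility of symmetric exchanges and allow a progress move on the \emph{target} tuple as well, then derive a contradiction from the two ``both blocked'' inclusions using the size equalities $|B_i\cap H|=|B'_i\cap H|$, $|B_j\cap H|=|B'_j\cap H|$ secured in Stage~1. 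Both arguments are short; yours is more symmetric and avoids introducing a third slot, while the paper's is one-sided and slightly more explicit as an algorithm.

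One small wording issue: your move~(iii) is stated for two bases ``with equal outside parts'', but in Stage~2 you apply it to slots $i$ and $j$ whose outside parts need not agree. This is harmless---swapping two $H$-elements between two type~$\le 1$ bases is always legal in $\widetilde M$ regardless of their outside parts, since the resulting sets are again of type~$\le 1$ and hence bases by Proposition~\ref{prop:type1} or Theorem~\ref{thm:SHR}---so simply drop that clause from~(iii).
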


\begin{proof}
First, we align all elements not in $H$ by applying symmetric exchanges. Suppose by induction, $B_i \setminus H = B'_i \setminus H$ for $i=1,\ldots,m-1$ thus 
\begin{equation}\label{eq:notHequal}
    \biguplus_{i\geq m} (B_i \setminus H)=\biguplus_{i\geq m} (B'_i \setminus H),
\end{equation}
but $B_m \setminus H \neq B'_m \setminus H$, say $a\in B_m \setminus H$ (reverse the role of $B_m,B'_m$ if needed). If $B'_m$ is of type 0 (respectively, contains $b\not\in H$), then there must exist some $k>m$ such that $B_k$ is also of type 0 (respectively, contains $b$) by (\ref{eq:notHequal}), and we can exchange $a\in B_m$ with an arbitrary element from $B_k\setminus B_m$ (respectively, $b\in B_k$), which is non-empty as $a\in B_m\setminus B_k$. Such an operation preserves the property that every subset is of type $\leq 1$.

Next, suppose by induction, $B_i \cap H = B'_i \cap H$ for $i=1,\ldots,m-1$ thus 
\begin{equation}\label{eq:Hequal}
    \biguplus_{i\geq m} (B_i \cap H)=\biguplus_{i\geq m} (B'_i \cap H).
\end{equation}
We then induct on $|B_m\triangle B'_m|$. If we are in the base case $B_m=B'_m$, then we can move on to the next pair. So suppose there exists $t\in (B'_m\setminus B_m) \cap H$. By (\ref{eq:Hequal}), there exists $k>m$ such that $t\in B_k$, and if $((B_m\setminus B'_m)\setminus B_k)\cap H\neq\emptyset$, we can pick $s$ from it and exchange $s,t$ between $B_m,B_k$, which reduces $|B_m\triangle B'_m|$ without modifying subsets whose indices are $<m$, nor changing the type of any subset.

Otherwise, we still pick some $s\in (B_m\setminus B'_m) \cap H$. The fact that $s\not\in B'_m$ and (\ref{eq:Hequal}) imply there exists $l>m, l\neq k$ such that $s\not\in B_l$, and we may assume $t\not\in B_l$ or else replace $k$ by $l$ in the above. Since $|B_k\cap H|, |B_l\cap H|$ differ by at most 1, while $s,t\in B_k, s,t\not\in B_l$, there exists $p\in (B_l\setminus B_k)\cap H$. Now we first exchange $t,p$ between $B_k, B_l$ before exchanging $s,t$ between $B_m$ and the updated $B_l$. This again reduces $|B_m\triangle B'_m|$ without changing other hypotheses.
\end{proof}

\begin{corollary}\label{cor:type1-WC-bases}
    Under the hypotheses of Lemma~\ref{lem:type1-WC}, if all $B_i$'s and $B'_i$'s are type~1 \emph{bases} of $M$, then there exists a sequence of symmetric exchanges transforming $(B_1,\dots,B_n)$ into $(B'_1,\dots,B'_n)$ such that all intermediate subsets are type~1 bases.
\end{corollary}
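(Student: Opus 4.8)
The plan is to simply re-run the proof of Lemma~\ref{lem:type1-WC} and observe that, under the stronger hypothesis that every $B_i$ and $B'_i$ is a type~1 basis, the procedure never produces a type~0 subset. Since every type~1 subset is automatically a basis of $M$ (Proposition~\ref{prop:type1}, i.e.\ $\Bcal(H,x)\subseteq\Bcal$), the ``type $\leq 1$'' guarantee of Lemma~\ref{lem:type1-WC} then upgrades to ``type~1 bases''. So the only thing to check is that each symmetric exchange carried out in the two phases of that proof keeps every subset of type exactly~$1$.

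First I would revisit Phase~1 (aligning the elements outside $H$). The key point is that the ``$B'_m$ is of type~0'' branch is now vacuous, because no $B'_i$ is type~0 by hypothesis; we are always in the ``$B'_m$ contains some $b\notin H$'' branch. Since $B_m$ is type~1 it has a unique element $a\notin H$ with $a\neq b$, so $b\notin B_m$, and since the target $B_k$ (type~1, containing $b$) has $b$ as its unique element outside $H$, also $a\notin B_k$. Exchanging $a$ and $b$ between $B_m$ and $B_k$ replaces their unique non-$H$ elements by the other one; both results still have $r-1$ elements in $H$ and exactly one outside, hence lie in $\Bcal(H,b)$ and $\Bcal(H,a)$ respectively and are type~1 bases, and the swap is a genuine symmetric exchange. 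Thus Phase~1 terminates with $B_i\setminus H=B'_i\setminus H$ for all $i$, every subset still being a type~1 basis.

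Then I would revisit Phase~2 (aligning the elements inside $H$). Every exchange there is between two elements $s,t\in H$, so it changes neither the number of elements a subset has inside $H$ nor the single element it has outside $H$; consequently type~1 bases stay type~1 bases. One only has to confirm, as before, that each such move is an honest symmetric exchange in $M$: e.g.\ for the move exchanging $s\in B_m\cap H$ with $t\in H\setminus B_m$, the result $B_m-s+t$ again has $r-1$ elements in $H$ plus the same one outside, so it lies in some $\Bcal(H,x)$ by Proposition~\ref{prop:type1}; the same applies verbatim to the auxiliary exchange of $t,p$ between $B_k,B_l$ used when $((B_m\setminus B'_m)\setminus B_k)\cap H=\emptyset$.

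The part requiring the most care --- though it is not really an obstacle --- is making sure each move is a legitimate symmetric exchange in $M$ and not merely in $\widetilde M$: one must check both that the incoming element genuinely lies outside the subset it enters (so that the operation has the correct form) and that the two resulting subsets land in some $\Bcal(H,x)$, which follows immediately from tracking the type count and Proposition~\ref{prop:type1}. Everything else is a direct transcription of the proof of Lemma~\ref{lem:type1-WC} with the type~0 case deleted.
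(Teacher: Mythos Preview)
Your argument is correct, but it takes a longer route than the paper's. The paper's proof is a one-liner: apply Lemma~\ref{lem:type1-WC} to get a sequence with all intermediate subsets of type $\leq 1$, and then observe that the \emph{total type} $\sum_i |B_i\setminus H|$ is invariant under symmetric exchanges (an exchange just moves one element from one subset to another). Since the starting tuple has total type exactly $n$, every intermediate tuple also has total type $n$; but with $n$ subsets each of type $\leq 1$, this forces every subset to be of type exactly~$1$, hence a basis by Proposition~\ref{prop:type1}.

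Your approach instead re-traces the algorithm of Lemma~\ref{lem:type1-WC} and verifies step by step that no type~0 subset ever appears. This is valid, and it has the mild advantage of being self-contained (you never invoke Lemma~\ref{lem:type1-WC} as a black box), but it duplicates work: every case distinction you make is subsumed by the single observation that total type is conserved. The invariant argument also scales better---if one were to tweak the algorithm in Lemma~\ref{lem:type1-WC}, the paper's proof of the corollary would survive unchanged, whereas yours would need to be re-verified.
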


\begin{proof}
    Apply Lemma~\ref{lem:type1-WC} to obtain an exchange sequence whose intermediate subsets are of type $\leq 1$, they must be all of type 1 by considering total type.
\end{proof}

We also use the following easy lemma multiple times in this paper.

\begin{lemma} \label{lem:type02}
    Let $M$ be a matroid with a stressed hyperplane $H$.
    Let $X$ be a type 0 subset and $Y$ a basis of $M$ of type $\geq 2$. Let $a\in Y\setminus H\subset Y\setminus X$. Then there exists $s\in X\setminus Y\subset H$  such that $X-s+a, Y-a+s$ are bases of $M$.
\end{lemma}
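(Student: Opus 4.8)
The plan is to carry out the desired exchange inside the relaxation $\widetilde{M}$, where $X$ is a genuine basis, and then argue that both of the resulting sets are in fact bases of $M$.

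First I would record the two inputs to an ordinary symmetric exchange in $\widetilde{M}$: the set $X$ is a basis of $\widetilde{M}$ because it is type 0, and $Y$, being a basis of $M$, is a basis of $\widetilde{M}$ by Theorem~\ref{thm:SHR}; they are distinct since their types differ. Because $X\subset H$ we have $Y\setminus H\subset Y\setminus X$, so $a\in Y\setminus X$. Applying the symmetric exchange property of $\widetilde{M}$ to the bases $Y,X$ and the element $a$ produces some $s\in X\setminus Y$ with both $Y-a+s$ and $X-s+a$ bases of $\widetilde{M}$; note $s\in X\subset H$, so indeed $s\in X\setminus Y\subset H$ as claimed in the statement.

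It remains to upgrade ``basis of $\widetilde{M}$'' to ``basis of $M$'' for the two output sets. For $X-s+a$ this is immediate (and does not even use the exchange): $X\setminus\{s\}$ is an $(r-1)$-subset of $H$ and $a\notin H$, so $X-s+a$ is a basis of $M$ by Proposition~\ref{prop:type1}. The only place the hypothesis $\type(Y)\geq 2$ is needed is to handle $Y-a+s$: deleting $a\in Y\setminus H$ and adjoining $s\in H$ lowers the type by exactly one, so $Y-a+s$ has type $|Y\setminus H|-1\geq 1$. In particular $Y-a+s$ is not a type 0 set, and since the bases of $\widetilde{M}$ are precisely the bases of $M$ together with the type 0 sets (Theorem~\ref{thm:SHR}), $Y-a+s$ must be a basis of $M$.

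The main (admittedly mild) obstacle is exactly this last point: an exchange performed in $\widetilde{M}$ need not remain inside $M$ in general, so one has to observe that the particular exchange chosen here lands on a type 1 set and a type $\geq 1$ set respectively, both of which are forced to lie in $M$.
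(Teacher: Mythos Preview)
Your proof is correct and follows essentially the same approach as the paper: perform the symmetric exchange in $\widetilde{M}$, then observe that $X-s+a$ has type $1$ and $Y-a+s$ has type $\geq 1$, so both are bases of $M$. The paper's version is simply more terse, and it does not bother to invoke Proposition~\ref{prop:type1} separately for $X-s+a$ since the type argument already suffices.
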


\begin{proof}
    Since both $X,Y$ are bases of $\widetilde{M}$, we can pick $s\in X\setminus Y$ such that $X-s+a, Y-a+s$ are bases of $\widetilde{M}$. But the former is of type 1 and the latter is of type $\geq 1$, so they are bases of $M$.
\end{proof}

\section{Degree 2} \label{sec:deg2}

We show the degree 2 part of (*) for any paving matroid $M$, and the degree 2 part of Theorem~\ref{thm:B}(2) in this section. Together we prove White's conjecture in degree 2 for paving matroids.

\begin{proposition} \label{prop:XY_XY}
    Let $M$ be a paving matroid and fix a hyperplane $H$ of size $\geq r$. Let $\widetilde{M}$ be the relaxation of $M$ at $H$. Let $X,Y,X',Y'$ be 4 bases of $M$ (hence $\widetilde{M}$) such that there exists an exchange sequence from $(X,Y)$ to $(X',Y')$ in $\widetilde{M}$ as
\begin{equation} \label{eq:deg2}
\begin{array}{c c c c c}
   X   & \swap{a}{s} & X'' & \swap{t}{b} & X'\\
   Y   & \swap{s}{a} & Y'' & \swap{b}{t} & Y'
\end{array}.
\end{equation}

Then there exists an exchange sequence from $(X,Y)$ to $(X',Y')$ in $M$.
\end{proposition}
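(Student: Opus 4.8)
The plan is to compress the problem into a rigid combinatorial configuration and then contradict the intersection bound of Proposition~\ref{prop:SH_intersection}. First I would make two harmless reductions. Restricting $M$ to $E_0:=X\cup Y=X'\cup Y'$ keeps it paving (Proposition~\ref{prop:minor-closed}); the four sets stay bases, $H\cap E_0$ is again a stressed hyperplane (Theorem~\ref{thm:PM_SHR}) of size $\geq r$, the given $\widetilde M$-sequence lives inside $E_0$, and an exchange sequence in $M|_{E_0}$ is one in $M$; so I may assume $E=X\cup Y$. Since $X,Y,X',Y'$ are bases of $M$, all have type $\geq 1$; if both intermediates $X'',Y''$ have type $\geq 1$ they too are bases of $M$ and we are done, so assume $X''=X-a+s$ has type $0$ (the other case is symmetric). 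Then $X''\subset H$ forces $a\notin H$, $X$ of type $1$ with $X\setminus H=\{a\}$, and $s\in H$; and $X'=X''-t+b$ being a basis of $M$ forces $t\in H$, $b\notin H$. The only possible coincidences are $s=t$ or $a=b$, and in each the net operation is a single symmetric exchange $(X,Y)\to(X',Y')$, valid because $X',Y'$ are bases. So from now on $a,b,s,t$ are distinct, $a,b\notin H$, $s,t\in H$.

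Next I would enumerate the two-step reroutes in $M$. Any two-step sequence from $(X,Y)$ to $(X',Y')$ must, on the $X$-coordinate, delete $\{a,t\}$ and insert $\{s,b\}$, giving four orderings: the ``original'' one (invalid, $X''$ has type $0$) and (ii) exchange $a\leftrightarrow b$ then $t\leftrightarrow s$; (iii) exchange $t\leftrightarrow s$ then $a\leftrightarrow b$; (iv) exchange $t\leftrightarrow b$ then $a\leftrightarrow s$. Since $X-a+b$ and $X-t+s$ are type $1$, hence bases by Proposition~\ref{prop:type1}, these are valid in $M$ exactly when, respectively, $Y-b+a$ is a basis; $Y-s+t$ is a basis; $X-t+b$ and $Y-b+t$ are bases. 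Assume all fail. Then $\type(Y)\geq 2$ (a type-$1$ $Y$ would make $Y-b+a$ type $1$), and in the paving matroid $Y-b+a=C(Y,a)=\{a\}\cup(Y\setminus\{b\})$, $Y-s+t=C(Y,t)=\{t\}\cup(Y\setminus\{s\})$ are circuits; since failure of (iii) gives $b\in C(Y,t)$, $Y-b+t$ is a basis, so failure of (iv) forces $X-t+b=C(X,b)=\{b\}\cup(X\setminus\{t\})$ to be a circuit. These span distinct hyperplanes $H_1=\cl(Y\setminus\{b\})\ni a$, $H_2=\cl(Y\setminus\{s\})\ni t$, $H_3=\cl(X\setminus\{t\})\ni b$, all different from $H$, with $b\notin H_1$, $s\notin H_2$, $t\notin H_3$. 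Also $\{a\}\cup(X\cap Y)\subset H_1\cap H_3$ (using $X\cap Y\subset Y\setminus\{b\}$ and $X\cap Y\subset X\setminus\{t\}$), so Proposition~\ref{prop:SH_intersection} gives $|X\cap Y|\leq r-3$.

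Now I would use longer reroutes. If $\type(Y)\geq 3$, pick a non-$H$ element $c\neq b$ of $Y$; the three-step sequence $X\colon a\leftrightarrow c,\ t\leftrightarrow s,\ c\leftrightarrow b$ (with matching $Y$-moves) is valid in $M$ whenever $(Y-c+a)-s+t=Y''-c+t$ is a basis, where $Y''=Y-s+a$ — and the fundamental circuit $C(Y'',t)$ misses at most one element of $Y''$ while there are $\geq 2$ admissible $c$, so some choice works. Hence $\type(Y)=2$, say $Y\setminus H=\{b,c\}$, and $|Y\cap H|=r-2$. Since $|X\cap Y|\leq r-3$ there is a ``spare'' element $e\in(X\cap H\setminus\{t\})\setminus Y$; the sequence $X\colon e\leftrightarrow b,\ a\leftrightarrow s,\ t\leftrightarrow e$ is valid in $M$ provided $e\notin H_1$ (the points being: $X-e+b$ is a basis because $e\neq t$ puts $e\in C(X,b)$; $Y-b+e$ is a basis iff $e\notin H_1=\cl(Y\setminus\{b\})$; the middle $Y$-intermediate is $C(Y,a)-s+e$, a basis iff $e\notin H_1$). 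So if not done, every spare $e$ lies in $H_1$; with $X\cap Y\subset Y\setminus\{b\}\subset H_1$ and $a\in H_1$ this yields $X\setminus\{t\}\subset H_1$. Then $H\cap H_1\supset(X\cap H\setminus\{t\})\cup(Y\cap H)$, of cardinality $(r-2)+(r-2)-|X\cap Y|$, so Proposition~\ref{prop:SH_intersection} forces $|X\cap Y|\geq r-2$, contradicting $|X\cap Y|\leq r-3$.

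The main obstacle is the third paragraph: designing the three-step reroutes and, crucially, checking that \emph{every} intermediate appearing in them is a basis of $M$ and not merely of $\widetilde M$ — type-$0$ sets such as $X-a+s$ reappear very readily, so each new set must be either type $1$ (automatically a basis, Proposition~\ref{prop:type1}) or provably outside the relevant closure via the fundamental-circuit identifications above. Managing this bookkeeping is exactly what makes the argument more delicate than the circuit-hyperplane case of~\cite{han2025white}, where an arbitrary exchange on the unique new basis already lands back in $M$.
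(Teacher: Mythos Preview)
Your argument is correct and follows a genuinely different route from the paper's. Both proofs open with the same reductions ($X''$ of type~$0$, $|\{a,b,s,t\}|=4$, $a,b\notin H$, $s,t\in H$) and both ultimately play Proposition~\ref{prop:SH_intersection} off against the structure forced by failed reroutes, but the organization diverges after that. The paper contracts $X\cap Y$ to make $X,Y$ disjoint, then symmetrically assembles four hyperplanes $H_{X,st},H_{X,ab},H_{Y,as},H_{Y,bt}$, proves the two covering conditions $E=H_{X,st}\cup H_{X,ab}=H_{Y,as}\cup H_{Y,bt}$, and finishes with a double count of all six pairwise intersections. You instead keep $X\cap Y$ in play, split on $\type(Y)$, and in each branch exhibit an explicit three-step reroute: via a non-$H$ element $c\in Y$ when $\type(Y)\geq 3$, and via a ``spare'' $e\in (X\cap H)\setminus(\{t\}\cup Y)$ when $\type(Y)=2$; your final contradiction uses only the single pair $H,H_1$ together with the earlier bound $|X\cap Y|\leq r-3$ extracted from $H_1\cap H_3$. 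Your approach is more hands-on and avoids the contraction, while the paper's symmetric double-counting template is what scales cleanly to the degree-$3$ analogue in Proposition~\ref{prop:XYZ_XYZ}. One small omission worth recording: in the $\type(Y)\geq 3$ branch you only verify the second $Y$-intermediate $Y''-c+t$; the first intermediate $Y-c+a$ must also be checked, and it is indeed a basis because $C(Y,a)=Y-b+a$ contains every $c\in Y\setminus\{b\}$.
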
 

\begin{proof}
    We are done if $X'',Y''$ are bases of $M$. On the other hand, they cannot be both of type 0, or else $X,Y\subset X''\cup Y''$ are also of type 0. Therefore we may assume $X''$ is of type 0 and $Y''$ is not. We may further assume $|\{a,b,s,t\}|=4$, otherwise, either $(X,Y)=(X',Y')$, or they only differ by a single symmetric exchange.
    
    By induction on $|E|$ (none of the argument in this paper considers matroids with ground sets larger than $E$), we may assume White's conjecture is true for any smaller paving matroids, thus without loss of generality $E=X\sqcup Y$: we may delete all elements outside of $X\cup Y$ and contract all elements in $X\cap Y$ if it is not, the minor is still paving by Proposition~\ref{prop:minor-closed}, while standard facts on matroid bases with respect to minor operations imply that we can lift an exchange sequence in the minor (guaranteed by White's conjecture) back to the original matroid.

    We write $X=\widetilde{X}+at, Y=\widetilde{Y}+bs$ (hence $X'=\widetilde{X}+bs,Y'=\widetilde{Y}+at$), and call $H=\cl(X'')=\cl(\tilde{X}+st)$ as $H_{X,st}$; note that $a,b\not\in H_{X,st}$, or else $X,Y$ are of type 0 and cannot be bases of $M$.\\
    
    {\bf Step 1}: We may assume $t\not\in C(Y'=\widetilde{Y}+at,s)$, otherwise $\widetilde{Y}+as\in\Bcal$ and
    $$
    \begin{array}{c c c c c}
        \widetilde{X}+at   & \swap{a}{b} & \widetilde{X}+bt & \swap{t}{s} & \widetilde{X}+bs\\
        \widetilde{Y}+bs   & \swap{b}{a} & \widetilde{Y}+as & \swap{s}{t} & \widetilde{Y}+at
    \end{array}
    $$
    is a valid exchange sequence in $M$. Here $\widetilde{X}+bt\in\Bcal(H_{X,st}, b)$.

    Therefore, we may assume $C(\widetilde{Y}+at,s)\subset\widetilde{Y}+as$, which must be an equality as the right hand side is already of size $r$. Consider the hyperplane $H_{Y,as}:=\cl(\widetilde{Y}+as)$ this circuit spans, we have $b,t\not\in H_{Y,as}$, or else $Y,Y'$ cannot be bases of $M$.
    
    Similarly, we may assume $s\not\in C(Y=\widetilde{Y}+bs,t)$, otherwise $\widetilde{Y}+bt\in\Bcal$ and
    $$
    \begin{array}{c c c c c}
        \widetilde{X}+at   & \swap{t}{s} & \widetilde{X}+as & \swap{a}{b} & \widetilde{X}+bs\\
        \widetilde{Y}+bs   & \swap{s}{t} & \widetilde{Y}+bt & \swap{b}{a} & \widetilde{Y}+at
    \end{array}
    $$
    is a valid exchange sequence in $M$ with $\widetilde{X}+as\in\Bcal(H_{X,st}, a)$. So $C(\widetilde{Y}+bs,t)=\widetilde{Y}+bt$ spans a hyperplane $H_{Y,bt}$ that does not contain $a,s$.

    Since $H_{Y,as}\cap H_{Y,bt}\supset\widetilde{Y}$, which is already of size $r-2$, by Proposition~\ref{prop:SH_intersection}, it must be an equality.\\

    {\bf Step 2}: We may assume $\widetilde{X}+ab$ is a circuit, or otherwise
    $$
    \begin{array}{c c c c c}
        \widetilde{X}+at   & \swap{t}{b} & \widetilde{X}+ab & \swap{a}{s} & \widetilde{X}+bs\\
        \widetilde{Y}+bs   & \swap{b}{t} & \widetilde{Y}+st & \swap{s}{a} & \widetilde{Y}+at
    \end{array}
    $$
    is a valid exchange sequence in $M$ with $\widetilde{Y}+st\in\Bcal(H_{Y,as}, t)$. Let $H_{X,ab}$ be the hyperplane it spans. We have $s,t\not\in H_{X,ab}$ and $H_{X,st}\cap H_{X,ab}=\widetilde{X}$ similarly.\\

    {\bf Step 3}: We may assume $E=H_{X,st}\cup H_{X,ab}$, or otherwise we may pick some $p\in E\setminus (H_{X,st}\cup H_{X,ab})$, which is necessarily in $\widetilde{Y}$, and
    $$
    \begin{array}{c c c c c c c}
        \widetilde{X}+at   & \swap{a}{p} & \widetilde{X}+tp & \swap{t}{b} & \widetilde{X}+bp & \swap{p}{s} & \widetilde{X}+bs\\
        \widetilde{Y}+bs   & \swap{p}{a} & \widetilde{Y}-p+abs & \swap{b}{t} & \widetilde{Y}-p+ast & \swap{s}{p} & \widetilde{Y}+at
    \end{array}
    $$
    is a valid exchange sequence in $M$. Here $\widetilde{X}+tp\in\Bcal(H_{X,st},p), \widetilde{X}+bp\in\Bcal(H_{X,ab},p)$, and $\widetilde{Y}-p+abs=(\widetilde{Y}-p+as)+b\in\Bcal(H_{Y,as},b), \widetilde{Y}-p+ast=(\widetilde{Y}-p+as)+t\in\Bcal(H_{Y,as},t)$.

    Similarly, we may assume $E=H_{Y,as}\cup H_{Y,bt}$, or otherwise we may pick some $q\in E\setminus (H_{Y,as}\cup H_{Y,bt})\subset\widetilde{X}$, and
    $$
    \begin{array}{c c c c c c c}
        \widetilde{X}+at   & \swap{q}{s} & \widetilde{X}-q+ast & \swap{t}{b} & \widetilde{X}-q+abs & \swap{a}{q} & \widetilde{X}+bs\\
        \widetilde{Y}+bs   & \swap{s}{q} & \widetilde{Y}+bq & \swap{b}{t} & \widetilde{Y}+tq & \swap{q}{a} & \widetilde{Y}+at
    \end{array}
    $$
    is a valid exchange sequence in $M$. Here $\widetilde{Y}+bq, \widetilde{Y}+tq\in\Bcal(H_{Y,bt},q)$, and $\widetilde{X}-q+ast=(\widetilde{X}-q+st)+a\in\Bcal(H_{X,st},a), \widetilde{X}-q+abs=(\widetilde{X}-q+ab)+s\in\Bcal(H_{X,ab},s)$.\\

    {\bf Step 4}: For each pair of distinct hyperplanes chosen from $H_{X,st}, H_{X,ab}, H_{Y,as}, H_{Y,bt}$, we consider the size of their intersection $|H_{\cdot,\cdot}\cap H_{\cdot,\cdot}|$ and sum over the $\binom{4}{2}=6$ terms. It can be seen that
    \begin{enumerate}
        \item each of $a,b,s,t$ appears in exactly 2 hyperplanes and contributes $\binom{2}{2}=1$ to the sum;
        \item each element in $\widetilde{X}\cup\widetilde{Y}$ appears in exactly 3 hyperplanes and contributes $\binom{3}{2}=3$ to the sum: for example, suppose $x\in\widetilde{X}$ thus $x\in H_{X,st}, H_{X,ab}$, by Step 3 we know that $x$ belongs to exactly one of $ H_{Y,as}, H_{Y,bt}$ as well.
    \end{enumerate}
    Therefore the sum is $3(|\widetilde{X}|+|\widetilde{Y}|)+4=3\cdot 2(r-2)+4=6r-8$, but on the other hand, the sum is at most $6(r-2)=6r-12$ by Proposition~\ref{prop:SH_intersection}, a contradiction.
\end{proof}

Now we deal with the second task.

\begin{proposition} \label{prop:XB_XB}
    Let $M$ be a (not necessarily paving) matroid and let $H$ be a stressed hyperplane of $M$ with $\widetilde{M}$ being the relaxation of $M$ at $H$.  
    Let $X,X'$ be subsets of type 0, and $Y,Y'$ be bases of $M$ of type $\geq 2$.  
    Suppose $(X,Y)$ and $(X',Y')$ differ by a single symmetric exchange in $\widetilde{M}$ that is proper ($s\neq t$):
$$
\begin{array}{c c c}
   X   & \swap{s}{t} & X'\\
   Y   & \swap{t}{s} & Y'
\end{array}.
$$

Then there exists an exchange sequence of length $\ell>1$ from $(X,Y)$ to $(X',Y')$ in $\widetilde{M}$ such that all intermediate steps only involve bases in $M$.
\end{proposition}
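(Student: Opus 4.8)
The plan is to exhibit an explicit exchange sequence of length $2$,
$$(X,Y)\ \longrightarrow\ (X_1,Y_1)\ \longrightarrow\ (X',Y'),$$
whose unique intermediate tuple $(X_1,Y_1)$ consists of two bases of $M$. Intuitively, in the first step we evict $s$ from the type-$0$ set $X$ and replace it by a carefully chosen element $a\in Y\setminus H$ (putting $s$ into the other slot), which upgrades the $X$-slot to a type-$1$ set and hence to a basis of $M$ by Proposition~\ref{prop:type1}; in the second step a single exchange of $a$ for $t$ between the two slots produces $(X',Y')$.

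First I would record what the hypotheses give. Since $X$ and $X'=X-s+t$ are type $0$ we have $X\cup X'\subseteq H$, so $s,t\in H$; since $s\in X\setminus Y$ and $t\in Y\setminus X$ we have $s\notin Y$, $t\in Y$, $t\notin X$, and in particular $s\neq t$; and $|Y\setminus H|\geq 2$ because $Y$ has type $\geq 2$. The crucial point is the claim that the fundamental circuit $C(Y,s)$ of $M$ is \emph{not} contained in $H$. If it were, then, since $s\in H$, we would have $C(Y,s)\subseteq (Y\cap H)\cup\{s\}$, a set of size $|Y\cap H|+1\leq (r-2)+1=r-1$; thus $C(Y,s)$ would be a circuit of $M$ of size $\leq r-1$ lying inside $H$. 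But a stressed hyperplane $H$ with $|H|\geq r$ contains no circuit of size $<r$: such a circuit could be extended to a subset $S\subseteq H$ with $|S|=r$, and $S$ is a circuit by Definition~\ref{def:SHP}, properly containing the smaller one, which is impossible. Hence some $a\in C(Y,s)$ lies outside $H$; as $C(Y,s)\subseteq Y\cup\{s\}$ and $s\in H$, this $a$ lies in $Y\setminus H$, and $a\neq s,t$.

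Now set $X_1:=X-s+a$ and $Y_1:=Y-a+s$. Then $X_1\in\Bcal(H,a)$, so $X_1$ is a basis of $M$ by Proposition~\ref{prop:type1}, and $Y_1$ is a basis of $M$ by the standard basis-exchange characterization, since $a\in C(Y,s)$. The step $(X,Y)\to(X_1,Y_1)$ is a symmetric exchange in $\widetilde M$ via the pair $(s,a)$: indeed $s\in X\setminus Y$, $a\in Y\setminus X$, $s\neq a$, and the two outputs are bases of $M$, hence of $\widetilde M$. For the last step, $X_1-a+t=X-s+t=X'$ and $Y_1-t+a=Y-t+s=Y'$, while $a\in X_1\setminus Y_1$, $t\in Y_1\setminus X_1$ and $a\neq t$; hence $(X_1,Y_1)\to(X',Y')$ is the symmetric exchange in $\widetilde M$ via the pair $(a,t)$, its outputs $X',Y'$ being bases of $\widetilde M$ by hypothesis. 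Thus $\ell=2>1$ and the only intermediate tuple $(X_1,Y_1)$ has both components bases of $M$, which is exactly what is required.

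I expect the only delicate step to be the claim $C(Y,s)\not\subseteq H$, where all three standing hypotheses---$H$ stressed, $|H|\geq r$, and $Y$ of type $\geq 2$---enter; the rest is routine verification that the elementhood side-conditions of the two proposed symmetric exchanges hold and that each output set is a basis of the relevant matroid.
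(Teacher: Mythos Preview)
Your proof is correct and uses the same key idea as the paper: pick $a\in C(Y,s)\setminus H$ (which exists because a circuit in the stressed hyperplane $H$ would have size $r$, yet $Y+s$ has at least two elements outside $H$) and use it to pass through a type-$1$ intermediate. Your version is in fact slightly more economical than the paper's: the paper also picks $b\in C(Y',t)\setminus H$ and runs a symmetric $3$-step sequence $(X,Y)\to(X-s+a,Y-a+s)\to(X'-t+b,Y'-b+t)\to(X',Y')$, whereas you observe that after $(X_1,Y_1)=(X-s+a,Y-a+s)$ a single exchange of $a$ with $t$ already lands on $(X',Y')$, so $\ell=2$ suffices.
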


\begin{proof}
    Consider the fundamental circuit $C(Y,s)$, which cannot be completely contained in the stressed $H$, or otherwise it has size $r$ thus missing only one element from $Y+s$, whereas $Y+s$ contains at least two elements outside of $H$, which is impossible. 
    Hence, we can pick some $a\in C(Y,s)\setminus H$.  
    Similarly, we can pick $b\in C(Y',t)\setminus H$.  
    Note that $a,b$ are not in $X,X'\subset H$.  
    Now the following exchange sequence is what we wanted (if $a=b$, then the middle step does nothing):
    $$
    \begin{array}{c c c c c c c}
        X   & \swap{s}{a} & X-s+a & \swap{a}{b} & X'-t+b & \swap{b}{t} & X'\\
        Y   & \swap{a}{s} & Y-a+s & \swap{b}{a} & Y'-b+t & \swap{t}{b} & Y'
    \end{array}.
    $$
Here $X-s+a\in\Bcal(H,a)$ and $X'-t+b\in\Bcal(H,b)$, while $Y-a+s$ and $Y'-b+t$ are bases of $M$ by the assumption on fundamental circuits.
\end{proof}

\begin{theorem} [Degree 2 case of Theorem~\ref{thm:B}(2)]\label{thm:B2}
    Let $M,\widetilde{M},H$ be as in Proposition~\ref{prop:XB_XB}, and suppose White's conjecture in degree 2 holds for $\widetilde{M}$ while the degree 2 part of {\rm (*)} holds for $M$. Then White's conjecture in degree 2 is true for $M$.
\end{theorem}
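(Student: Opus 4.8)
The plan is to take an exchange sequence in $\widetilde M$ provided by White's conjecture for $\widetilde M$ and surgically convert it into one in $M$. Since $B_1,B_2,B_1',B_2'$ are bases of $M$, hence of $\widetilde M$, I get an exchange sequence $(B_1,B_2)=P_0,P_1,\dots,P_\ell=(B_1',B_2')$ in $\widetilde M$. I will call a pair $P_k=(A,B)$ in it \emph{bad} if $A$ or $B$ has type $0$, and \emph{good} otherwise; since a basis of $\widetilde M$ is a basis of $M$ precisely when it has type $\geq 1$, the good pairs are exactly those whose entries are both bases of $M$, and $P_0,P_\ell$ are good. The key invariant is that $\tau:=|A\setminus H|+|B\setminus H|$ is constant along the sequence --- it counts, with multiplicity, the elements of the fixed multi-set $B_1\uplus B_2$ lying outside $H$ --- and $\tau\geq 2$ since no basis of $M$ is contained in the hyperplane $H$. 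So in any bad pair exactly one entry has type $0$ and the other has type $\tau\geq 2$, hence is a basis of $M$.

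First I would analyse a maximal run of consecutive bad pairs. If $P_k=(X,Y)$ is bad with $X\subset H$ of type $0$, write $P_{k+1}=(X-u+v,\,Y-v+u)$ with $u\in X$ (so $u\in H$) and $v\in Y$. If $v\notin H$ then $X-u+v$ has type $1$ and $Y-v+u$ has type $\tau-1\geq 1$, so $P_{k+1}$ would be good; hence if $P_{k+1}$ is also bad then $v\in H$ and $X-u+v$ is again of type $0$. Thus within a maximal run the same coordinate carries the type-$0$ entry throughout, the other entry is always a type-$\tau$ basis of $M$, and consecutive pairs differ by a proper symmetric exchange of two elements of $H$. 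In particular, each step internal to a run joins two bad pairs that --- after relabelling the two coordinates if necessary --- satisfy the hypotheses of Proposition~\ref{prop:XB_XB}.

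Next I would use Proposition~\ref{prop:XB_XB} to replace every such internal step by an exchange sequence in $\widetilde M$ all of whose intermediate pairs are good; this yields a new $\widetilde M$-exchange sequence $Q_0,\dots,Q_{\ell'}$ from $(B_1,B_2)$ to $(B_1',B_2')$ in which every bad pair is \emph{isolated} (both neighbours good, and the endpoints still good). Then, for each isolated bad $Q_k$, the pairs $Q_{k-1},Q_{k+1}$ are tuples of bases of $M$ of degree $2$ joined by the $2$-step sequence $Q_{k-1}\to Q_k\to Q_{k+1}$ in $\widetilde M$, so the degree $2$ part of (*) gives an exchange sequence in $M$ from $Q_{k-1}$ to $Q_{k+1}$; I replace each such length-$2$ segment by it. Because distinct bad pairs are isolated, these segments meet at most in their good endpoints, so the replacements are compatible, and every remaining step joins two good pairs --- a symmetric exchange between bases of $M$ with both outcomes bases of $M$, hence a legal step in $M$. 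Stitching everything together gives the desired exchange sequence in $M$.

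The main obstacle is really just the bookkeeping in the first step: pinning down the structure of a maximal run of bad pairs (constant type-$0$ coordinate, partner always a type-$\geq 2$ basis of $M$) so that Proposition~\ref{prop:XB_XB} applies verbatim to each internal step. Once that is in place, Proposition~\ref{prop:XB_XB} and (*) do all the work, and no genuinely new combinatorics is needed.
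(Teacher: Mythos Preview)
Your proposal is correct and follows essentially the same approach as the paper's own proof: use Proposition~\ref{prop:XB_XB} on steps between two consecutive type-$0$ pairs to isolate every bad pair between good ones, then invoke the degree~$2$ part of~(*) on each isolated bad pair. Your analysis of a maximal bad run (showing the type-$0$ coordinate stays fixed and the partner has type $\tau\geq 2$) makes explicit what the paper compresses into the one-line observation that if $X^{(t)}$ is of type~$0$ then $Y^{(t-1)},Y^{(t+1)}$ cannot be.
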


\begin{proof}
    Let $(X,Y)$ and $(X',Y')$ be two pairs of bases of $M$ (hence $\widetilde{M}$) with equal multi-set union. We have $|X\setminus H|+|Y\setminus H|=|X'\setminus H|+|Y'\setminus H|\geq 2$.
    
    By the assumption on $\widetilde{M}$, there exists an exchange sequence $(X^{(0)},Y^{(0)})=(X,Y),\ldots,(X^{(\ell)}, Y^{(\ell)})=(X',Y')$ in $\widetilde{M}$, which we assume to have no repeated pairs. Moreover, if $X^{(t)}$ is of type 0, then $Y^{(t)}$ must be of type $\geq 2$, and $Y^{(t-1)},Y^{(t+1)}$ cannot be of type 0; same conclusion with the role of $X,Y$ reversed. Next, by applying Proposition~\ref{prop:XB_XB} to every adjacent pairs $(X^{(t)},Y^{(t)}),(X^{(t+1)},Y^{(t+1)})$ in which both $X^{(t)}, X^{(t+1)}$ (or $Y^{(t)}, Y^{(t+1)}$) are of type 0, we may assume every instance of $(X^{(t)},Y^{(t)})$ in the sequence that involves type 0 subsets is between two pairs $(X^{(t-1)},Y^{(t-1)}), (X^{(t+1)},Y^{(t+1)})$ that only involve bases in $M$, which can be further resolved using (*) and yields an exchange sequence in $M$.
\end{proof}

\begin{corollary}
    White's conjecture in degree 2 is true for paving matroids.
\end{corollary}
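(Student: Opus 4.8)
The plan is a short induction that combines the two main results of this section with the structure theory of paving matroids. By Theorem~\ref{thm:PM_SHR}, every paving matroid $M$ admits a finite sequence of relaxations $M=M_0, M_1,\ldots,M_k=U_{r,E}$, in which each $M_{i+1}$ is the relaxation of $M_i$ at a stressed hyperplane (every hyperplane of a paving matroid is stressed, again by Theorem~\ref{thm:PM_SHR}, and $M_{i+1}$ is again paving). I would induct on the length $k$ of such a sequence. Before starting, note that we may assume each relaxation in the sequence is at a hyperplane of size $\geq r$: a hyperplane of size $r-1$ contributes no new bases in Theorem~\ref{thm:SHR}, so $M_i=M_{i+1}$ in that case and the step can simply be deleted; thus $k$ can be taken to strictly decrease under the operation we perform below.

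For the base case $k=0$ we have $M=U_{r,E}$, and White's conjecture (in every degree, in particular in degree $2$) holds for it by the $H=E$ variant of Lemma~\ref{lem:type1-WC} mentioned just before its statement: every $r$-subset of $E$ is then a type $\leq 1$ subset (indeed a type $\leq 1$ basis), so the lemma directly produces the desired sequence of symmetric exchanges.

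For the inductive step, suppose $k\geq 1$ and let $H$ be the stressed hyperplane of $M$ (of size $\geq r$) used in the first relaxation, so that $\widetilde{M}:=M_1$ is paving and admits a relaxation sequence of length $k-1$ down to $U_{r,E}$. By the induction hypothesis, White's conjecture in degree $2$ holds for $\widetilde{M}$. Since $M$ is paving, Proposition~\ref{prop:XY_XY} shows that the degree $2$ part of {\rm (*)} holds for $M$ with respect to $H$. Hence the hypotheses of Theorem~\ref{thm:B2} are satisfied, and it yields White's conjecture in degree $2$ for $M$, completing the induction.

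I do not expect any genuine obstacle at this stage: all the real content is already packaged in Proposition~\ref{prop:XY_XY} (the degree $2$ part of {\rm (*)} for paving matroids) and in Theorem~\ref{thm:B2} (descent along a single relaxation). The only things that need a moment's care are bookkeeping: verifying that ``paving'' is preserved at every stage of the relaxation sequence, so that Proposition~\ref{prop:XY_XY} applies at each step (this is exactly the second sentence of Theorem~\ref{thm:PM_SHR}), and that the relaxations can be taken non-trivial so the induction terminates, as arranged above.
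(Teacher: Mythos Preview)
Your proposal is correct and follows essentially the same approach as the paper: induct on the length of a relaxation sequence from $M$ to $U_{r,E}$ given by Theorem~\ref{thm:PM_SHR}, with the base case trivial and the inductive step supplied by combining Proposition~\ref{prop:XY_XY} (the degree~2 part of {\rm (*)}) with Theorem~\ref{thm:B2}. Your extra bookkeeping about non-trivial relaxations and preservation of the paving property is accurate and only makes explicit what the paper leaves implicit.
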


\begin{proof}
    By Theorem~\ref{thm:PM_SHR}, any paving matroid is related to $U_{r,E}$ by a sequence of stressed hyperplane relaxations. We induct on the length of such a sequence with the base case being $U_{r,E}$ itself, in which case the statement is trivial. The induction step is the combination of Proposition~\ref{prop:XY_XY} and Theorem~\ref{thm:B2}.
\end{proof}

\section{Degree 3}

We prove the degree 3 part of (*) for paving matroids and another preparation lemma in this section.

\begin{proposition} \label{prop:XYZ_XYZ}
    Let $M$ be a paving matroid and fix an arbitrary hyperplane $H$ of size $\geq r$. Let $\widetilde{M}$ be the relaxation of $M$ at $H$. Let $X,Y,Z,X',Y',Z'$ be 6 bases of $M$ (hence $\widetilde{M}$) such that there exists an exchange sequence from $(X,Y,Z)$ to $(X',Y',Z')$ in $\widetilde{M}$ as
\begin{equation} \label{eq:deg3}
\begin{array}{c c c c c}
   X   & \swap{a}{s} & X''   & \swap{t}{b} & X' \\
   Y & \swap{s}{a} & Y' &             & Y' \\
   Z &             & Z  & \swap{b}{t} & Z'
\end{array}.
\end{equation}

Then there exists an exchange sequence from $(X,Y,Z)$ to $(X',Y',Z')$ in $M$.
\end{proposition}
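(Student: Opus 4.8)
The plan is to run the four-step template of the proof of Proposition~\ref{prop:XY_XY} --- reduce, rule out alternative exchange sequences to expose circuit-hyperplanes, impose covering, double count --- now in this less symmetric three-basis setting. First, if $X''$ is a basis of $M$ we are done, so assume $X''$ has type $0$. Writing $\widetilde{X}=X\setminus\{a,t\}$ we get $X=\widetilde{X}+at$, $X''=\widetilde{X}+st\subseteq H=\cl(X'')$, hence $\widetilde{X}\subseteq H$ and $s,t\in H$; also $a\notin H$ (else $X\subseteq H$) and $b\notin H$ (else $X'=\widetilde{X}+bs\subseteq H$), so $X\in\Bcal(H,a)$ and $X'\in\Bcal(H,b)$ are type-$1$ bases. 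Since $X''$ has type $0$ one cannot have $a=s$ or $b=t$ (these would make $X''$ a basis), and $a\neq t$, $b\neq s$ always; after disposing of the remaining coincidence $a=b$ or $s=t$ we may assume $|\{a,b,s,t\}|=4$. Comparing types across the two steps of (\ref{eq:deg3}) shows that $Y$ has type $\geq 1$ while $Y'$ and $Z$ have type $\geq 2$ and $Z'$ has type $\geq 1$; write $Y=\widetilde{Y}+s$ and $Z=\widetilde{Z}+b$, so $Y'=\widetilde{Y}+a$ and $Z'=\widetilde{Z}+t$ with $a\notin Y$ and $t\notin Z$. Finally, by induction on $|E|$ and White's conjecture for smaller paving matroids --- deleting all elements outside $X\cup Y\cup Z$ and contracting all elements common to $X$, $Y$, and $Z$, exactly as in Proposition~\ref{prop:XY_XY} --- we may assume $E=X\cup Y\cup Z$ and $X\cap Y\cap Z=\emptyset$.

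\textbf{Alternative sequences and their hyperplanes.} The net transformation is $X\colon -a-t+s+b$, $Y\colon -s+a$, $Z\colon -b+t$, and the sole obstruction in (\ref{eq:deg3}) is that $X$ passes through the type-$0$ set $X''$. We instead route $X$ from $\widetilde{X}+at$ to $\widetilde{X}+bs$ through the type-$1$ basis $\widetilde{X}+bt\in\Bcal(H,b)$, using $Y$ and $Z$ as partners. For instance, when $a\notin Z$ and $t\notin Y$,
$$
\begin{array}{ccccccc}
\widetilde{X}+at & \swap{a}{b} & \widetilde{X}+bt & \swap{t}{s} & \widetilde{X}+bs & & \widetilde{X}+bs\\
\widetilde{Y}+s   &             & \widetilde{Y}+s  & \swap{s}{t} & \widetilde{Y}+t  & \swap{t}{a} & \widetilde{Y}+a\\
\widetilde{Z}+b   & \swap{b}{a} & \widetilde{Z}+a  &             & \widetilde{Z}+a  & \swap{a}{t} & \widetilde{Z}+t
\end{array}
$$
is a valid exchange sequence in $M$ provided $\widetilde{Z}+a$ and $\widetilde{Y}+t$ are bases of $M$; analogous sequences arise by permuting the three moves and, in the cases $a\in Z$, $t\in Y$, $b\in Y$, or $s\in Z$, by using the spare copies of $a,b,s,t$, alongside the sequence through $\widetilde{X}+ab$ already treated in Proposition~\ref{prop:XY_XY}. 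Because $M$ is paving, declaring each such sequence invalid forces the relevant size-$r$ set ($\widetilde{X}+ab$, $\widetilde{Z}+a$, $\widetilde{Z}+s$, $\widetilde{Y}+t$, and so on) to be a circuit, hence to span a stressed hyperplane; the spanning set pins down which of $a,b,s,t$ each hyperplane contains and which it avoids, and by Proposition~\ref{prop:SH_intersection} any two of these hyperplanes meet in at most $r-2$ elements.

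\textbf{Covering and double counting.} If some relevant pair of the hyperplanes above fails to cover $E$, an element outside their union --- necessarily one of the ``spectator'' elements in $\widetilde{Y}\cup\widetilde{Z}$ --- can be threaded through the exchange sequence just as the elements $p,q$ are in Step~3 of the proof of Proposition~\ref{prop:XY_XY}, yielding a valid sequence in $M$; so we may assume each such pair covers $E$. Summing $|H_i\cap H_j|$ over the pairs of chosen hyperplanes, the covering assumptions force every element (in particular every element of $\widetilde{X}\cup\widetilde{Y}\cup\widetilde{Z}$) to lie in many of the hyperplanes, making the sum large, while Proposition~\ref{prop:SH_intersection} bounds it above by $(r-2)$ times the number of pairs --- and the two estimates are incompatible, exactly as in Step~4 of the proof of Proposition~\ref{prop:XY_XY}.

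\textbf{Main obstacle.} As emphasized in the introduction, the real effort lies in the bookkeeping: every subset occurring in an alternative sequence must be certified to be a basis of $M$, not merely of $\widetilde{M}$ (typically by exhibiting it in some $\Bcal(H',\cdot)$ or recognizing it as a type-$\geq 2$ basis), and the asymmetry between $Y$, which sheds an element of $H$, and $Z$, which sheds an element outside $H$, roughly doubles the case analysis relative to degree $2$. Getting the list of alternative sequences --- and hence of hyperplanes and of covering pairs --- exactly right so that the final count closes is the crux.
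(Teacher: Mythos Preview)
Your high-level strategy is exactly the paper's, and you correctly anticipate the four-step template. But what you have written is the \emph{plan}, not the proof, and you say so yourself in the final sentence. The substance of Proposition~\ref{prop:XYZ_XYZ} is precisely the execution you omit, and several features of that execution are not predicted by your outline.

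First, the degenerate cases are more than bookkeeping. The paper spends four separate arguments (Degenerated Cases I--IV) disposing of $s=t$, $a=b$, $s\notin Z$ (equivalently $t\notin Y'$), and $b\in Y$ (equivalently $a\in Z$); Case IV alone splits into three sub-cases and invokes both the degree-$2$ result and the relaxation at a \emph{different} hyperplane $\cl(\widetilde{Y}+bt)$ or $\cl(Z-a+t)$. Only after these cases does one know $t\in Y$ and $s\in Z$, which is what allows the paper to set $\widetilde{Y}=Y-st$ and $\widetilde{Z}=Z-bs$ of size $r-2$; your $\widetilde{Y}=Y-s$, $\widetilde{Z}=Z-b$ of size $r-1$ signal that you have not reached this point.

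Second, your mechanism ``invalid alternative sequence $\Rightarrow$ size-$r$ circuit $\Rightarrow$ stressed hyperplane'' does not produce all six hyperplanes the paper needs. Two of them, $H_{Y,t}=\cl(\widetilde{Y}+t)$ and $H_{Z,s}=\cl(\widetilde{Z}+s)$, are closures of rank-$(r-1)$ \emph{independent} sets, not of circuits, and they enter only to make the covering/counting step close. There is also an intermediate step (Step~3) showing $a\in H_{Z,bt}$ and $b\in H_{Y,as}$, which boosts two of the circuit-hyperplanes to contain three of $a,b,s,t$ rather than two; without this the final inequality $15r-18\le 15r-30$ does not materialize. None of this is visible from the template, and your list ``$\widetilde{X}+ab$, $\widetilde{Z}+a$, $\widetilde{Z}+s$, $\widetilde{Y}+t$, and so on'' does not match the collection that actually works.

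In short: right approach, but the proof is the details, and the details here are both lengthy and not mechanical.
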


By Section~\ref{sec:deg2}, we know White's conjecture in degree 2 is true for $M$.

\begin{proof}
    We may assume $X''$ is of type 0 thus $s,t\in H$, or else we are done; in particular, since $X,X'$ are bases of $M$, $a,b\not\in H$. If $s=t,a=b$, then $X=X'$, and it is reduced to the degree 2 case.
    
    We further work out a few ``degenerated'' cases before setting up the double counting proof as in degree 2. 
    
    {\bf Degenerated Case I}: $s=t$ but $a\neq b$, so $X'=X-a+b,Y'=Y-s+a,Z'=Z-b+s$. Apply the symmetric exchange axiom in $M$ with $X,Z$ and $b\in Z\setminus X$ to obtain $p\in X\setminus Z$ such that $X-p+b, Z-b+p\in\Bcal$. If $p=a$, then $(X,Y,Z)$ to $(X',Y,Z-b+a)$ is a valid exchange step in $M$, and from there to $(X',Y',Z')$ is another symmetric exchange. So we assume $p\neq a$, together with $p\neq s$ as $s\not\in X$. Now we have $p\in C(X,b)-ab\subset H$, and the following exchange sequence is valid in $M$:
    $$
    \begin{array}{c c c c c c c}
        X   & \swap{p}{b} & X-p+b & \swap{a}{s} & X'-p+s & \swap{s}{p} & X'\\
        Y   &             & Y    & \swap{s}{a} & Y'      &              & Y'\\
        Z   & \swap{b}{p} & Z-b+p &  =         & Z'-s+p & \swap{p}{s} & Z'
    \end{array}.
    $$
    Here $X'-p+s=(X-ap+s)+b\in\Bcal(H,b)$.

    {\bf Degenerated Case II}: $a=b$ but $s\neq t$, so $X'=X-t+s,Y'=Y-s+a,Z'=Z-a+t$. Apply the symmetric exchange axiom in $M$ with $X,Y$ and $a\in X\setminus Y$ to obtain $q\in Y\setminus X$ such that $X-a+q, Y-q+a\in\Bcal$. Since $X-a\subset H$, $q$ cannot be contained in $H$, in particular it is not $s$ nor $t$, and $q$ remains in $Y'$.

    Next, apply the symmetric exchange axiom with $X',Y'$ and $q\in Y'\setminus X'$ to obtain $p\in X'\setminus Y'$ such that $X'-p+q,Y'-q+p\in\Bcal$. Since $a$ is in $Y'$, $p$ cannot be $a$; since $t\not\in X'$, $p\neq t$. Now the following exchange sequence is valid in $M$:
$$
        \begin{array}{c c c c c c c c c}
           X   & \swap{a}{q} & X - a + q 
                      & \swap{p}{s} & X-ap+qs 
                      & \swap{t}{a} & X'-p+q
                      & \swap{q}{p} & X' \\
           Y & \swap{q}{a} & Y - q + a
                      & \swap{s}{p} & Y-qs+ap
                      &  =          & Y'-q+p
                      & \swap{p}{q} & Y' \\
           Z &             & Z
                      &             & Z
                      & \swap{a}{t} & Z'
                      &             & Z'
        \end{array}.
$$
Here $X-ap+qs=(X-ap+s)+q\in\Bcal(H,q)$; in the case of $p=s$, the second step above simply does nothing.\\

    Hence, we may assume $|\{a,b,s,t\}|=4$ from now on.\\

    {\bf Degenerated Case III}:  $s\not\in Z$ (and by symmetry, the case $t\not\in Y'$). Since the type of $Z'$ is strictly less than that of $Z$ but is still a basis of $M$, $Z$ must be of type $\geq 2$, and since $C(Z,s)$ is missing at most one element from $Z+s$ due to the paving assumption, there exists $q\in C(Z,s)\setminus H$. Now there following exchange sequence is valid in $\widetilde{M}$, with all subsets in the first and last column being bases of $M$:
$$
\begin{array}{c c c c c}
   X   & \swap{a}{s} & X-a+s   & \swap{s}{q} & X-a+q \\
   Y & \swap{s}{a} & Y' &             & Y' \\
   Z &             & Z  & \swap{q}{s} & Z-q+s
\end{array}.
$$
Here $X-a+q\in\Bcal(H,q)$. But this exchange sequence can be viewed as the case $s=t$ considered in Degenerated Case I (or even earlier, if $a=q$). So the middle step can be replaced by an exchange sequence in $M$, while the task of connecting the last column to $(X',Y',Z')$ is reduced to the degree 2 case.\\

{\bf Degenerated Case IV}: $b\in Y$ hence $Y'$ (and by symmetry, the case $a\in Z,Z'$). From Degenerated Case III, we may assume $t\in Y'$ hence $Y$. Consider $C(Y,a)$, which is missing at most one element from $Y+a$, so it either contains $b$ or $t$.

If $b\in C(Y,a)$, then
$$
\begin{array}{c c c c c c c}
   X   & \swap{a}{b} & X-a+b & \swap{b}{s} & X-a+s  & \swap{t}{b} & X' \\
   Y & \swap{b}{a} & Y-b+a & \swap{s}{b} & Y' &             & Y' \\
   Z &             & Z &             & Z  & \swap{b}{t} & Z'
\end{array}.
$$
is a sequence whose first step is a valid exchange in $M$ as $X-a+b\in\Bcal(H,b)$, while the next two steps are valid exchanges in $\widetilde{M}$ with the form considered in Degenerated Case II, so it can be replaced by an exchange sequence in $M$.

If $t\in C(Y,a)$ and $a\in Z$, then
$$
\begin{array}{c c c c c c c}
   X &             & X       & \swap{a}{b} & X-a+b  & \swap{t}{s} & X' \\
   Y & \swap{t}{a} & Y-t+a &            & Y-t+a & \swap{s}{t}  & Y' \\
   Z & \swap{a}{t}  & Z-a+t & \swap{b}{a}  & Z'  &            & Z'
\end{array}.
$$
is a sequence whose last step is a valid exchange in $M$, while the first two steps are either valid exchanges in $M$ (if $Z-a+t$ is a basis of $M$), or are valid changes in the relaxation of $M$ at the hyperplane $\cl(Z-a+t)$ with the form considered in Degenerated Case II (with the row order reversed and elements renamed), so it can be replaced by an exchange sequence in $M$.

If $t\in C(Y,a)$ and $a\not\in Z$, then
$$
\begin{array}{c c c c c c c}
   X & \swap{a}{b} & X-a+b &          & X-a+b  & \swap{t}{s} & X' \\
   Y &            & Y &  \swap{t}{a}  & Y-t+a & \swap{s}{t}  & Y' \\
   Z & \swap{b}{a}  & Z-b+a & \swap{a}{t}  & Z'  &            & Z'
\end{array}.
$$
is a sequence whose last step is a valid exchange in $M$, while the first two steps are either valid exchanges in $M$ already, or at least in its relaxation at $\cl(Z-b+a)$ with the form considered in Degenerated Case I (with the row order permuted and elements renamed), so it can be replaced by an exchange sequence in $M$.\\

    Following the argument in degree 2, we assume $X\cup Y\cup Z=E$ and $X\cap Y\cap Z=\emptyset$.
    For the rest of the proof, we write $X=\widetilde{X}+at, Y=\widetilde{Y}+st, Z=\widetilde{Z}+bs$ (hence $X'=\widetilde{X}+bs, Y'=\widetilde{Y}+at, Z'=\widetilde{Z}+st$) with $b,s\not\in\widetilde{X}, a,b\not\in \widetilde{Y}, a,t\not\in \widetilde{Z}$. We also write $H_{X,st}=H=\cl(\widetilde{X}+st)$ which does not contain $a,b$. Finally, we define $U=\widetilde{X}\cap \widetilde{Y}, V=\widetilde{Y}\cap \widetilde{Z}, W=\widetilde{Z}\cap \widetilde{X}, X^\circ=\widetilde{X}\setminus (\widetilde{Y}\cup \widetilde{Z}), Y^\circ=\widetilde{Y}\setminus (\widetilde{Z}\cup \widetilde{X}), Z^\circ=\widetilde{Z}\setminus (\widetilde{X}\cup \widetilde{Y})$.\\

{\bf Step 1}: We may assume $\widetilde{X}+ab$ is a circuit hence spans a hyperplane $H_{X,ab}$, which necessarily does not contain $s,t$ as $X,X'$ are bases, or otherwise
$$
\begin{array}{c c c c c}
   \widetilde{X}+at & \swap{t}{b} & \widetilde{X}+ab  & \swap{a}{s} & \widetilde{X}+bs \\
   \widetilde{Y}+st &           & \widetilde{Y}+st & \swap{s}{a}  & \widetilde{Y}+at \\
   \widetilde{Z}+bs & \swap{b}{t}  & \widetilde{Z}+st  &            & \widetilde{Z}+st
\end{array}
$$
is a valid exchange sequence. The containment $H_{X,st}\cap H_{X,ab}\supset\widetilde{X}$ is an equality as it is already of size $r-2$.\\

{\bf Step 2}: We may assume $s\not\in C(Z,t)$ (and by symmetry, $t\not\in C(Y',s)$). Otherwise, we may consider the following sequence:
$$
\begin{array}{c c c c c c c}
   \widetilde{X}+at & \swap{t}{s} & \widetilde{X}+as &          & \widetilde{X}+as  & \swap{a}{b} & \widetilde{X}+bs \\
   \widetilde{Y}+st &            & \widetilde{Y}+st &  \swap{s}{b}  & \widetilde{Y}+bt & \swap{b}{a}  & \widetilde{Y}+at \\
   \widetilde{Z}+bs & \swap{s}{t}  & \widetilde{Z}+bt & \swap{b}{s}  & \widetilde{Z}+st  &            & \widetilde{Z}+st
\end{array},
$$
here $\widetilde{X}+as\in\Bcal(H_{X,st},a)$. This sequence is either already valid in $M$ (if $\widetilde{Y}+bt$ is a basis of $M$), or the last two steps are at least valid in the relaxation of $M$ at the hyperplane $\cl(\widetilde{Y}+bt)$, with the form considered in Degenerated Case I.\\

{\bf Step 3}: From Step 2, we may assume $C(Z,t)=Z+t-s=\widetilde{Z}+bt, C(Y',s)=Y'+s-t=\widetilde{Y}+as$ are circuits that span two hyperplanes $H_{Z,bt}, H_{Y,as}$, and $s\not\in H_{Z,bt}$ as $Z,Z'$ are bases (and by symmetry, $t\not\in H_{Y,as}$). We may further assume $a\in H_{Z,bt}$ (and by symmetry, $b\in H_{Y,as}$), for otherwise
$$
\begin{array}{c c c c c}
   \widetilde{X}+at & \sim & \widetilde{X}+bs  &          & \widetilde{X}+bs \\
   \widetilde{Y}+st &            & \widetilde{Y}+st & \swap{s}{a}  & \widetilde{Y}+at \\
   \widetilde{Z}+bs & \sim  & \widetilde{Z}+at  & \swap{a}{s}   & \widetilde{Z}+st
\end{array}
$$
is a valid exchange sequence, here $\widetilde{Z}+at\in\Bcal(H_{Z,bt},a)$ and we make use of the validity of White's conjecture in degree 2 for the first step. Therefore we shall write $H_{Z,abt}, H_{Y,abs}$ instead. The size $r-1$ independent sets $\widetilde{Y}+t, \widetilde{Z}+s$ also span two hyperplanes $H_{Y,t}, H_{Z,s}$ respectively. Using a similar cardinality argument, we have $H_{Y,abs}\cap H_{Y,t}=\widetilde{Y}, H_{Z,abt}\cap H_{Z,s}=\widetilde{Z}$.\\

{\bf Step 4}: We may assume $E=H_{X,st}\cup H_{X,ab}$, or otherwise we may pick $p\in E\setminus(H_{X,st}\cup H_{X,ab})$, say $p\in Y^\circ\cup V$ (the case $p\in Z^\circ\cup V$ is similar), and the following exchange sequence is valid:
$$
\begin{array}{c c c c c c c}
   \widetilde{X}+at & \swap{a}{p} & \widetilde{X}+tp & \swap{t}{b} & \widetilde{X}+bp  & \swap{p}{s} & \widetilde{X}+bs \\
   \widetilde{Y}+st &  \swap{p}{a}  & \widetilde{Y}-p+ast &         & \widetilde{Y}-p+ast & \swap{s}{p}  & \widetilde{Y}+at \\
   \widetilde{Z}+bs &          & \widetilde{Z}+bs &  \swap{b}{t}  & \widetilde{Z}+st  &         & \widetilde{Z}+st
\end{array},
$$
here $\widetilde{X}+tp\in\Bcal(H_{X,st},p), \widetilde{X}+bp\in\Bcal(H_{X,ab},p), (\widetilde{Y}-p+as)+t\in\Bcal(H_{Y,abs},t)$.\\

{\bf Step 5}: We may assume $E=H_{Y,abs}\cup H_{Y,t}$ (and by symmetry, $E=H_{Z,abt}\cup H_{Z,s}$), or otherwise we may pick $p\in E\setminus(H_{Y,abs}\cup H_{Y,t})$. Such a $p$ is either in $Z^\circ\cup W$, in which case the following exchange sequence is valid: 
$$
\begin{array}{c c c c c c c c c}
   \widetilde{X}+at &           & \widetilde{X}+at & \swap{t}{s} & \widetilde{X}+as  &  \swap{a}{b} & \widetilde{X}+bs &           & \widetilde{X}+bs \\
   \widetilde{Y}+st & \swap{t}{p}  & \widetilde{Y}+sp & \swap{s}{t} & \widetilde{Y}+tp &         & \widetilde{Y}+tp & \swap{p}{a} & \widetilde{Y}+at\\
   \widetilde{Z}+bs & \swap{p}{t}  & \widetilde{Z}-p+bst &         & \widetilde{Z}-p+bst  & \swap{b}{a} & \widetilde{Z}-p+ast & \swap{a}{p} & \widetilde{Z}+st
\end{array},
$$
here $\widetilde{X}+as,\widetilde{X}+bs\in\Bcal(H_{X,ab},s), \widetilde{Y}+sp\in\Bcal(H_{Y,abs},p), \widetilde{Y}+tp\in\Bcal(H_{Y,t},p), (\widetilde{Z}-p+bt)+s,(\widetilde{Z}-p+at)+s\in\Bcal(H_{Z,abt},s)$.

Or else, $p$ is in $X^\circ$, and the following exchange sequence is valid:
$$
\begin{array}{c c c c c c c}
   \widetilde{X}+at & \swap{p}{s} & \widetilde{X}-p+ast & \swap{t}{b} & \widetilde{X}-p+abs  & \swap{a}{p} & \widetilde{X}+bs \\
   \widetilde{Y}+st &  \swap{s}{p}  & \widetilde{Y}+tp &         & \widetilde{Y}+tp & \swap{p}{a}  & \widetilde{Y}+at \\
   \widetilde{Z}+bs &          & \widetilde{Z}+bs &  \swap{b}{t}  & \widetilde{Z}+st  &         & \widetilde{Z}+st
\end{array},
$$
here $(\widetilde{X}-p+st)+a\in\Bcal(H_{X,st},a), (\widetilde{X}-p+ab)+s\in\Bcal(H_{X,ab},s), \widetilde{Y}+tp\in\Bcal(H_{Y,t},p)$.\\

{\bf Step 6}: For each pair of hyperplanes chosen from $H_{X,st}, H_{X,ab}, H_{Y,abs}, H_{Y,t}, H_{Z,abt}, H_{Z,s}$, we consider the size of their intersection $|H_{\cdot,\cdot}\cap H_{\cdot,\cdot}|$ and sum over the $\binom{6}{2}=15$ terms. It can be seen that
    \begin{enumerate}
        \item each of $a,b,s,t$ appears in exactly 3 hyperplanes and contributes $\binom{3}{2}=3$ to the sum;
        \item each element in $X^\circ\cup Y^\circ\cup Z^\circ$ appears in exactly 4 hyperplanes and contributes $\binom{4}{2}=6$ to the sum: for example, suppose $y\in Y^\circ$ thus $y\in H_{Y,abs}, H_{Y,t}$, by Step 4 and 5 we know that $y$ belongs to exactly one of $H_{X,st}, H_{X,ab}$ and exactly one of $H_{Z,abt}, H_{Z,s}$ as well.
        \item each element in $U\cup V\cup W$ appears in exactly 5 hyperplanes and contributes $\binom{5}{2}=10$ to the sum: for example, suppose $u\in U\subset X,Y$ thus $u\in H_{X,st}, H_{X,ab}, H_{Y,abs}, H_{Y,t}$, by Step 5 we know that $u$ belongs to exactly one of $H_{Z,abt}, H_{Z,s}$ as well.
    \end{enumerate}
    Therefore the sum is $6(|X^\circ|+Y^\circ|+|Z^\circ|)+10(|U|+|V|+|W|)+12\geq 5[(|X^\circ|+|U|+|W|)+(|Y^\circ|+|U|+|V|)+(|Z^\circ|+|V|+|W|)]+12=5(|\widetilde{X}|+|\widetilde{Y}|+|\widetilde{Z}|)+12=5\cdot 3(r-2)+12=15r-18$, but on the other hand, the sum is at most $15(r-2)=15r-30$ by Proposition~\ref{prop:SH_intersection}, a contradiction.
\end{proof}

\begin{proposition} \label{prop:XBB_XBB}
    Let $M,\widetilde{M},H$ be as in Theorem~\ref{thm:B2}. Let $Y, Z,Y', Z'$ be bases of $M$ such that $(Y,Z)$ and $(Y', Z')$ differ only by a symmetric exchange (with $y\neq z$)
    $$
    \begin{array}{c c c}
       X   &             & X\\
       Y   & \swap{y}{z} & Y'\\
       Z   & \swap{z}{y} & Z'
    \end{array},
    $$
and let $X$ be of type 0. Suppose the total type of $Y,Z$ (hence $Y',Z'$) is at least $3$.
    
Then there exists an exchange sequence of length $\ell>1$ from $(X,Y,Z)$ to $(X,Y',Z')$ in $\widetilde{M}$ such that all intermediate steps only involve bases in $M$.
\end{proposition}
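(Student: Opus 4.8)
The plan is to ``detour'' the type-$0$ subset $X$ through a type-$1$ basis of $M$. Since $X$ is not a basis of $M$, any exchange sequence from $(X,Y,Z)$ to $(X,Y',Z')$ all of whose intermediate triples consist of bases of $M$ must begin and end with a symmetric exchange between $X$ and one of $Y,Z$ (turning $X$ into a type-$1$ basis and then back); such a sequence automatically has length $\ell>1$, and the task is to realize the net effect of $y\leftrightarrow z$ in between while the first coordinate stays a basis of $M$. The hypothesis ``total type $\geq 3$'', combined with $Y,Z$ being bases (hence of type $\geq 1$), forces one of the coordinates to have type $\geq 2$ whenever a moving element lies in $H$, which is exactly what makes Lemma~\ref{lem:type02} and the circuit estimates below applicable. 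We use throughout that $X$, being an $r$-subset of the stressed hyperplane $H$, is a circuit of $M$, so no $(r-1)$-subset of $X$ spans a proper flat; hence $|X\cap G|\leq r-2$ for every hyperplane $G\neq H$, and this supplies the elements of $X$ we need.

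\emph{The case $y,z\notin H$.} Pick $s\in X\setminus\cl(Y-y)$, possible since $\cl(Y-y)$ is a hyperplane and $|X|=r$; then $s\notin Y$ and $y\in C(Y,s)$, so $X-s+y\in\Bcal(H,y)$ and $Y-y+s$ is a basis of $M$. When $s\neq z$ and $z\notin X$, the sequence
$$
\begin{array}{c c c c c c c}
   X & \swap{s}{y} & X-s+y & \swap{y}{z} & X-s+z & \swap{z}{s} & X\\
   Y & \swap{y}{s} & Y-y+s &             & Y-y+s & \swap{s}{z} & Y'\\
   Z &             & Z     & \swap{z}{y} & Z'    &             & Z'
\end{array}
$$
does the job: the middle step is a symmetric exchange of the first and third coordinates, with $Z'=Z-z+y$ a basis of $M$ by hypothesis and $X-s+z\in\Bcal(H,z)$, and in the last step $Y-y+s-s+z=Y'$. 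The coincidence $s=z$ shortens the sequence to two steps, and the coincidence $z\in X$ is handled by first moving a non-$H$ element of $Z$ into $X$ instead of $z$.

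\emph{The cases where $y\in H$ or $z\in H$.} These carry the real content. Here no moving element can sit inside $X\subseteq H$, so the lift of $X$ must pull in a genuine non-$H$ element $a$ of a coordinate of type $\geq 2$ — forced to be $Z$ when $z\notin H$, $y\in H$ (else $Z'=Z-z+y$ would be of type $0$, contradicting that it is a basis of $M$), forced to be $Y$ symmetrically when $y\notin H$, $z\in H$, and whichever of $Y,Z$ has type $\geq 2$ when $y,z\in H$. The delicate point is that, after lifting (say) $Z\mapsto Z-a+s$, performing $y\leftrightarrow z$ must leave the $Z$-coordinate at a basis of $M$, namely $Z'-a+s$; so $a$ must lie simultaneously in the fundamental circuits $C(Z,s)$ and $C(Z',s)$. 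Such $a\in Z\setminus H$ and $s\in X\setminus Z$ exist: if $z\in\cl(Z-a)$ then $\cl(Z-a)=\cl(Z'-a)$ and any $s\in X$ outside this single hyperplane works ($\neq H$ since $Z-a$ meets $E\setminus H$, using $\mathrm{type}(Z)\geq 2$, and $|X\cap\cl(Z-a)|\leq r-2$); otherwise one first secures an $s$ for which the relevant moving element is absent from $C(Z,s)$, forcing $C(Z,s)=C(Z',s)$, and takes $a$ a non-$H$ member of that common circuit. Lifting $X\mapsto X-s+a\in\Bcal(H,a)$, then exchanging $y\leftrightarrow z$ on the $Y$- and $Z$-coordinates (which keeps the $Z$-coordinate a basis by the above), then reversing the lift, produces the required sequence; the remaining coincidences among $\{a,s,y,z\}$ and memberships in $X$ are disposed of by the minor reroutings already seen.

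I expect this last set of cases to be the main obstacle: guaranteeing that every intermediate subset is a basis of $M$, and not merely of $\widetilde M$, when the moving elements live inside the stressed hyperplane is precisely the new phenomenon absent from the degree-$2$ Proposition~\ref{prop:XB_XB}, where one always moves a non-$H$ element out of a type-$\geq 2$ basis and no compatibility between two fundamental circuits is required.
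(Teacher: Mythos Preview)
Your high-level plan — detour $X$ through a type-$1$ basis, perform the $y\leftrightarrow z$ exchange, then undo the detour — is the same as the paper's, and your observation that $|X\cap G|\leq r-2$ for every hyperplane $G\neq H$ (since any $(r-1)$-subset of the circuit $X$ already spans $H$) is a clean tool. However, the proposal has a genuine gap in the hard cases.

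First a minor point: in the case $y,z\notin H$, your choice of $s\in X\setminus\cl(Y-y)$ requires $\cl(Y-y)\neq H$, which fails exactly when $\type(Y)=1$; you never rule this out. (It is fixable by using $Z$ instead when $\type(Z)\geq 2$, but you do not say so, and your remarks about the ``coincidences'' $s=z$ and $z\in X$ are vacuous here since $s\in H$ and $z\notin H$.)

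The real problem is your existence argument for $a,s$ when $y\in H$ or $z\in H$. You need $a\in C(Z,s)\cap C(Z',s)\setminus H$ for some $s\in X\setminus(Z\cup Z')$, and you split into two branches. In the first you write ``if $z\in\cl(Z-a)$'', but for $a\neq z$ this is automatic since $z\in Z-a$; presumably you mean $y\in\cl(Z-a)$, which indeed gives $\cl(Z-a)=\cl(Z'-a)$ and an easy choice of $s$. In the second branch (``otherwise'') you want $s$ with $z\notin C(Z,s)$, i.e.\ $s\in X\cap\cl(Z-z)$. But you have only $|X\cap\cl(Z-z)|\leq r-2$, and nothing prevents this set (after removing $Z\cup\{y\}$) from being empty. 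You have not shown that the two branches together are exhaustive: it is entirely possible that $y\notin\cl(Z-a)$ for every $a\in Z\setminus H$ \emph{and} that no suitable $s\in X\cap\cl(Z-z)$ exists. The hand-wave ``the remaining coincidences \ldots\ are disposed of by the minor reroutings already seen'' does not address this.

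The paper's proof takes a different route precisely to avoid this simultaneous-circuit-membership issue. It first disposes of the total-type-$3$ case separately (reduce everything to type $1$ via Lemma~\ref{lem:type02} and apply Corollary~\ref{cor:type1-WC-bases}), then assumes $\type(Y)\geq 2$ and always lifts through $Y$. In the hardest subcase ($y,z\in H$ with $y,z\notin X$) the paper does \emph{not} produce a short explicit sequence; instead it reaches a pair $(X-pq+cz,\,Y-cy+pq)$ with the same multiset union as $(X,Y')$ and invokes the already-established White's conjecture in degree~$2$ (Theorem~\ref{thm:B2}) to connect them. Your approach tries to bypass that appeal to degree~$2$, which would be a pleasant simplification, but the argument as written does not close.
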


\begin{proof}
If the total type is 3, then exactly one of $Y,Z$ (respectively, $Y', Z'$) is of type 2 whereas the other 1. We can, on each side, apply Lemma~\ref{lem:type02} to exchange an element not in $H$ in the type 2 basis with an element in $X$ to make all three bases to be of type 1, in which case the statement follows from Corollary~\ref{cor:type1-WC-bases}.

Hence, we may assume the total type is at least 4 and $\type (Y)\geq 2$ for the rest of the proof.

{\bf Case I}: $y,z\not\in H$. By Lemma~\ref{lem:type02}, we can pick $p\in X \setminus Y$ such that $X - p + y , Y - y + p$ are bases of $M$. We also have $X-p+z$ is of type 1 and the following exchange sequence is valid:
$$
        \begin{array}{c c c c c c c}
           X   & \swap{p}{y} & X - p + y & \swap{y}{z} & X - p + z & \swap{z}{p} & X \\
           Y   & \swap{y}{p} & Y - y + p &   =        & Y' - z + p & \swap{p}{z} & Y' \\
           Z   &             & Z         & \swap{z}{y} & Z'         &             & Z'
        \end{array}.
$$

{\bf Case II}: $y \not\in H, z \in H$ (and by symmetry, the case $y \in H, z \not\in H$). As in Case I, we first pick $p\in X$ such that $X - p + y , Y - y + p \in \Bcal$. If $z\not\in X$, then $z\in Z \setminus (X - p + y)$ and we can pick $q\in (X - p + y) \setminus Z$ such that $X-pq+yz, Z - z + q \in \Bcal$; note that $q\neq y$ as $X-p+z$ is of type 0. The following exchange sequence is thus valid.

$$
        \begin{array}{c c c c c c c c c}
           X   & \swap{p}{y} & X - p + y & \swap{q}{z} & X - pq + yz & \swap{z}{p} & X - q + y & \swap{y}{q} & X\\
           Y   & \swap{y}{p} & Y - y + p &      =      & Y' - z + p & \swap{p}{z} & Y'         &             & Y'\\
           Z   &             & Z         & \swap{z}{q} & Z - z + q & =            & Z' - y + q & \swap{q}{y} & Z'
        \end{array},
$$
here $X-q+y\in\Bcal(y)$. If $z\in X$, then we take $q=z$ in the above sequence and omit the second step (as well as the third step if $z=p$).\\

{\bf Case III}: $y, z \in H$. Again applying Lemma~\ref{lem:type02}, we pick some $c\in Y\setminus H$, which cannot be $y,z$, and $p\in X\setminus Y$ such that $X-p+c,Y-c+p\in\Bcal$.

If $y,z\not\in X$, then $y\in (Y-c+p)\setminus (X-p+c)$, and there exists $q\in (X-p+c)\setminus (Y-c+p)$ such that $X-pq+cy, Y-cy+pq\in\Bcal$; note that $q\neq c$ as $X-p+y$ is of type 0, so $q\in X\subset H$. Now the following exchange sequence is valid:
$$
        \begin{array}{c c c c c c c c c}
           X   & \swap{p}{c} & X - p + c 
               & \swap{q}{y} & X - pq + cy 
               & \swap{y}{z} & X - pq + cz & \sim & X\\ 
           Y   & \swap{c}{p} & Y - c + p 
               & \swap{y}{q} & Y - cy + pq 
               &             & Y - cy + pq & \sim & Y'\\
           Z   &             & Z 
               &             & Z 
               & \swap{z}{y} & Z'  &     & Z'
        \end{array},
$$
here $X - pq + cz\in\Bcal(c)$ and the last step makes use of the validity of White's conjecture in degree 2 guaranteed by Theorem~\ref{thm:B2}: since $\type(Y')=\type(Y)\geq 2$, $(X,Y')$ is a symmetric exchange away from a pair of bases of $M$ by Lemma~\ref{lem:type02}, which can be further related to the fourth column above by symmetric exchanges.

If $y\in X, z\not\in X$ (and by symmetry, the case $z\not\in X, y\in X$), then $y$ is already in $X-p+c$. Hence we may set $q=y$ in the second step and let it does nothing, the sequence remains valid.

For the last case $y,z\in X$, note that $C(Y,z)$ cannot be completely in $H$, otherwise it is of size $r$ and misses only one element from $Y+z$, while $Y+z$ contains $\type(Y)\geq 2$ elements not in $H$. So there exists $c\in C(Y,z)\setminus H$, and $Y - c + z \in \Bcal$ by the choice of $c$. Thus the following exchange sequence is valid:
$$
    \begin{array}{c c c c c c c}
        X   & \swap{z}{c}  & X - z + c 
            & \swap{y}{z}  & X - y + c 
            & \swap{c}{y}  & X \\
        Y   & \swap{c}{z}  & Y - c + z 
            & =        & Y' - c + y 
            & \swap{y}{c}  & Y' \\
        Z   &              & Z 
            & \swap{z}{y}  & Z' 
            &              & Z'
        \end{array},
$$
here $X-z+c,X-y+c\in\Bcal(c)$.
\end{proof}

\section{Higher Degree}

We prove Theorem~\ref{thm:B} in this section, which also wraps up the proof of Theorem~\ref{thm:A}. We assume $H$ is a (size $\geq r$) stressed hyperplane of a (not necessarily paving) matroid $M$ and let $\widetilde{M}$ be its relaxation at $H$.

\subsection{White's Conjecture for $M$ Implies White's Conjecture for $\widetilde{M}$}

\begin{proof} [Proof of Theorem~\ref{thm:B}(1)]
Suppose $(B_1,\dots,B_n)$ and $(B'_1,\dots,B'_n)$ are two tuples of bases of $\widetilde{M}$ such that they have the same multi-set union.  
We aim to show that there exists a sequence of symmetric exchanges transforming $(B_1,\dots,B_n)$ into $(B'_1,\dots,B'_n)$.

By repeatedly applying Lemma~\ref{lem:type02}, we may assume each tuple either does not contain any type 0 bases or does not contain any type $\geq 2$ bases. Since the total type is preserved by symmetric exchanges, the two tuples belong to the same case.\\

{\bf Case I}: There are no type 0 bases. In this case, all $B_i$'s and $B'_i$'s are bases of $M$, and there exists an exchange sequence using bases of $M$ (hence $\widetilde{M}$) by assumption.\\

{\bf Case II}: There are no type $\geq 2$ bases. This is Lemma~\ref{lem:type1-WC}.
\end{proof}

\subsection{Setup for Induction in the Opposite Direction} \label{sec:induct}

Suppose White's conjecture is true for $\widetilde{M}$, and (*) holds for $M$. By Section~\ref{sec:deg2}, we know White's conjecture in degree 2 is true for $M$. For $n\geq 3$, fix a pair of tuples $(B_1,\ldots,B_n), (B'_1,\ldots,B'_n)$ of bases of $M$ with equal multi-set union. As an easy observation, the total type of $B_j$'s (respectively, $B'_j$'s) is at least $n$ as each basis is of type $\geq 1$.

By hypothesis, there exists a valid exchange sequence in $\widetilde{M}$ that connects the tuples: $$\mathfrak{B}=((B_1^{(0)},\ldots,B_n^{(0)}), (B_1^{(1)},\ldots,B_n^{(1)}),\ldots,(B_1^{(\ell)},\ldots,B_n^{(\ell)})),$$
in which we assume there are no repeated tuples.
Define $\Hcal_i(\mathfrak{B}):=\{1\leq j\leq\ell: B_i^{(j)}\subset H\}$ for each $i=1,\ldots,n$ and $\Rcal(\mathfrak{B}):=\{1\leq i\leq n:\Hcal_i(\mathfrak{B})\neq\emptyset\}$. By definition, when $\Rcal(\mathfrak{B})=\emptyset$, $\mathfrak{B}$ itself is a valid exchange sequence only using bases in $M$. So by induction on $|\Rcal(\mathfrak{B})|$, if $\Rcal(\mathfrak{B})\neq\emptyset$, it suffices to fix an arbitrary $i\in\Rcal(\mathfrak{B})$ and construct a new exchange sequence $\mathfrak{B}'$ such that $\Rcal(\mathfrak{B}')\subset\Rcal(\mathfrak{B})\setminus\{i\}$. By permuting indices if necessary, we assume $i=1$ for the rest of the proof.

\subsection{Reducing $\Hcal_1$ into Disjoint Singletons}

We want to modify $\mathfrak{B}$, without changing $\Rcal$, so that $\Hcal_1$ contains no consecutive indices. Suppose $\{l,l+1\}\subset\Hcal_1(\mathfrak{B})$. We aim to replace the $(l+1)$-th exchange step by a longer exchange sequence in $\widetilde{M}$ with the same starting and ending tuples (and re-index the steps if necessary), such that
\begin{enumerate}
    \item every $B_1^{(\bullet)}$ of the internal steps is either in $M$, or is at least 2 steps apart from another instance of $B_1^{(\bullet)}$ of type 0 (in all cases except Case II(d), the former is guaranteed), so in the updated $\Hcal_1$, between the two indices originally corresponding to $l,l+1$ (inclusively), any two indices are now at least 2 steps apart;
    \item for every other position $k$, the bases in the internal steps are either bases of $M$, or $k\in\Rcal$ to begin with (in all cases except Case II(c), the latter follows trivially from construction that these $B_k^{(\bullet)}$'s are equal to either $B_k^{(l)}$ or $B_k^{(l+1)}$), so the replacement does not change $\Rcal$.
\end{enumerate}

{\bf Case I}: The symmetric exchange between the $l$-th and the $(l+1)$-th tuple involves the first position, say

$$
\begin{array}{c c c}
   X:=B_1^{(l)} & \swap{s}{t} & X':=B_1^{(l+1)} \\
   Y:=B_k^{(l)} & \swap{t}{s} & Y':=B_k^{(l+1)}
\end{array}.
$$

$X,X'$ are by definition of type 0 and $s,t\in H$, so $\type(Y)=\type(Y')$.

{\bf Case I(a)}: $\type(Y)\geq 2$. In which case we replace this exchange step by the sequence described in Proposition~\ref{prop:XB_XB}.

{\bf Case I(b)}: $\type(Y)<2$. By the pigeonhole principle, there exists $k'$ such that the type of $Z:=B_{k'}^{(l)}=B_{k'}^{(l+1)}$ is at least 2. Pick $q\subset Z\setminus H$ and $p\in X\setminus Z$ such that $X-p+q,Z-q+p\in\Bcal$ using Lemma~\ref{lem:type02}. Note that $q\neq s,t$ as $s,t\in H$ but $q\not\in H$, and $p\neq t$ as $p\in X, t\not\in X$. If $p\neq s$, then the following replacement exchange sequence suffices:
$$
\begin{array}{c c c c c c c}
   X & \swap{p}{q} & X-p+q & \swap{s}{t} & X'-p+q & \swap{q}{p}   & X' \\
   Y &            & Y &  \swap{t}{s}  & Y' &            & Y' \\
   Z & \swap{q}{p}  & Z-q+p &          & Z-q+p  & \swap{p}{q}  & Z
\end{array},
$$
here $X'-p+q\in\Bcal(q)$ is a basis of $M$. A similar sequence from the other direction can be constructed if upon considering the same $q\in Z\setminus H$, the corresponding $p'\in X'\setminus Z$ such that $X'-p'+q,Z-q+p'\in\Bcal$ is not $t$. Hence, we may assume $X-s+q,Z-q+s,X'-t+q,Z-q+t$ are all bases of $M$, and the following replacement exchange sequence suffices:
$$
\begin{array}{c c c c c c c}
   X & \swap{s}{q} & X-s+q & =      & X'-t+q & \swap{q}{t}   & X' \\
   Y &            & Y &  \swap{t}{s}  & Y' &            & Y' \\
   Z & \swap{q}{s}  & Z-q+s & \swap{s}{t}  & Z-q+t  & \swap{t}{q}  & Z
\end{array}.
$$

{\bf Case II}: The symmetric exchange between the $l$-th and the $(l+1)$-th tuple does not involve the first position, say
$$
\begin{array}{c c c}
   X:=B_1^{(l)} &            & X=B_1^{(l+1)} \\ 
   Y:=B_k^{(l)} & \swap{s}{t} & Y':=B_k^{(l+1)} \\
   Z:=B_{k'}^{(l)}           & \swap{t}{s} & Z':=B_{k'}^{(l+1)}
\end{array}.
$$

{\bf Case II(a)}: $\type(Y)+\type(Z)<3$. By the pigeonhole principle, there exists $k''\neq k,k'$ such that the type of $W:=B_{k''}^{(l)}=B_{k''}^{(l+1)}$ is at least $2$. Apply Lemma~\ref{lem:type02} and pick $q\in W\setminus H$ and $p\in X\setminus W$ such that $X':=X-p+q,W':=W-q+p\in\Bcal$. Then we replace the exchange step by $(X,Y,Z,W)\rightarrow (X',Y,Z,W')\rightarrow(X',Y',Z',W')\rightarrow(X,Y',Z',W)$.

{\bf Case II(b)}: $\type(Y)+\type(Z)\geq 3$, and by assuming $\type(Y)\leq\type(Z)$ without loss of generality, $\type(Y),\type(Y')\geq 1$. In which case $\type(Z)\geq 2$ thus $\type(Z')\geq 1$, so $Y,Y',Z,Z'\in\Bcal$, and we can replace this exchange step by the sequence described in Proposition~\ref{prop:XBB_XBB}.

{\bf Case II(c)}: $\type(Y)+\type(Z)\geq 3$, and by assuming $\type(Y)\leq\type(Z)$ as in Case II(b), $\type(Y)=0,\type(Y')=1$ thus $\type(Z)\geq 3$ (and by symmetry, the case $\type(Y')=0,\type(Y)=1$). We have $s\in H, t\not\in H$. Since $t\in Z\setminus H$, we pick $p\in X\setminus Z$ such that $X-p+t, Z-t+p\in\Bcal$ by Lemma~\ref{lem:type02}. If $p\not\in Y'$, then the following exchange sequence works:

$$
\begin{array}{c c c c c c c}
   X & \swap{p}{t} & X-p+t &       & X-p+t & \swap{t}{p}   & X \\
   Y &            & Y &  \swap{s}{p}  & Y-s+p & \swap{p}{t}   & Y' \\
   Z & \swap{t}{p}  & Z-t+p & \swap{p}{s}  & Z' &          & Z'
\end{array}.
$$
Notice that $Y-s+p$ in the third column is of type 0, but this does not expand $\Rcal$ as $Y$ is already of type 0. Also, if $p=s$, then the second step simply does nothing.

If $p\in Y'$ (hence $Y$), then $p\neq s$ as $s\not\in Y'$. Since $t\in Y'\setminus H\subset Y'\setminus X$, we can pick $q\in X\setminus Y'$ such that $X-q+t, Y'-t+q$ are bases of $\widetilde{M}$, in which case $X-q+t$ is of type 1 thus a basis of $M$, whereas $Y'-t+q$ is of type 0 but does not expand $\Rcal$ as explained above. If $q=s$, then the following exchange sequence works:
$$
    \begin{array}{c c c c c c c}
        X & \swap{p}{t} & X - p + t 
                      & \swap{s}{p} & X - s + t 
                      & \swap{t}{s} & X\\
        Y &             & Y
                       &             & Y
                      & \swap{s}{t} & Y' \\
        Z & \swap{t}{p} & Z-t+p 
                      & \swap{p}{s} & Z' 
                      &             & Z'
    \end{array}.
$$
If $q\neq s$, then $q\in X\setminus Y'=X\setminus (Y-s+t)$ implies $q\not\in Y$ and $q\neq p$, and the following exchange sequence works:
$$
    \begin{array}{c c c c c c c c c}
        X & \swap{p}{t} & X-p+t & \swap{q}{p} & X-q+t &             & X-q+t & \swap{t}{q} & X\\
        Y &             & Y    & \swap{p}{q} & Y-p+q & \swap{s}{p} & Y'-t+q & \swap{q}{t} & Y'\\
        Z  & \swap{t}{p} & Z-t+p &           & Z-t+p & \swap{p}{s} & Z'     &             & Z'
        \end{array},
$$
here both $p,q$ are chosen from $X\subset H$, so $Y-p+q$ is of type 0 (but does not expand $\Rcal$).

{\bf Case II(d)}: $\type(Y)+\type(Z)\geq 3$, and by assuming $\type(Y)\leq\type(Z)$ as in Case II(b), $\type(Y)=\type(Y')=0$. Since $\type(Z)=\type(Z')\geq 3$, we first apply Proposition~\ref{prop:XB_XB} with $(Y,Z),(Y',Z')$ to obtain an exchange sequence $(Y^{(0)},Z^{(0)})=(Y,Z),(Y^{(1)}, Z^{(1)}),\ldots,(Y^{(\ell')},Z^{(\ell')})=(Y',Z')$ in $\widetilde{M}$ such that $\ell'>1$ and all intermediate steps only involves bases in $M$. Now each step in the exchange sequence $(X,Y^{(0)},Z^{(0)})=(X,Y,Z),(X,Y^{(1)}, Z^{(1)}),\ldots,(X,Y^{(\ell')},Z^{(\ell')})=(X,Y',Z')$ can be further replaced by a longer exchange sequence whose first position are all bases of $M$ in the internal steps, using Case II(b) or II(c).

\subsection{Eliminating $\Hcal_1$}

In the final step, every occurrence of type 0 subset $B_1^{(j)}$ in the first position of $\mathfrak{B}$ is surrounded by bases $B_1^{(j-1)}, B_1^{(j+1)}$ of $M$ (necessarily of type 1). In particular, both the $j$-th and the $(j+1)$-th exchange steps involve the first position. Our task is to replace the two said steps by an exchange sequence valid in $\widetilde{M}$, such that all subsets in the first position in the sequence are bases of $M$, while not expanding $\Rcal$. By the time we finished processing all $j\in\Hcal_1$, 1 is no longer contained in $\Rcal$, as desired.\\

{\bf Case I}: Both steps involve the same two positions, say the first and the $k$-th. If $B_k^{(j-1)}, B_k^{(j+1)}$ are bases of $M$, then we may apply the degree 2 part of (*). Otherwise, say $B_k^{(j-1)}$ is of type 0, then there exists a unique $a\in (B_1^{(j-1)}\cup B_k^{(j-1)})\setminus H$ and it is in $B_1^{(j-1)}$, $B_k^{(j)}$ and $B_1^{(j+1)}$. That is, the exchange sequence looks like this:
$$
\begin{array}{c c c c c}
   B_1^{(j-1)}   & \swap{a}{s} & B_1^{(j)} & \swap{t}{a} & B_1^{(j+1)}\\
   B_k^{(j-1)}   & \swap{s}{a} & B_k^{(j)} & \swap{a}{t} & B_k^{(j+1)}
\end{array}.
$$
We can then merge the exchanges into one:
$$
\begin{array}{c c c}
   B_1^{(j-1)}   & \swap{t}{s} & B_1^{(j+1)}\\
   B_k^{(j-1)}   & \swap{s}{t} & B_k^{(j+1)}
\end{array}.
$$

{\bf Case II}: The $j$-th step is between the first and the $k$-th position while the $(j+1)$-th is between the first and the $k'$-th, i.e.,
$$
    \begin{array}{c c c c c}
       X:=B_1^{(j-1)}   & \swap{a}{s} & X'':=B_1^{(j)}   & \swap{t}{b} & X':=B_1^{(j+1)} \\
       Y:=B_k^{(j-1)}   & \swap{s}{a} & Y':=B_k^{(j)} &             & Y' \\
       Z:=B_{k'}^{(j-1)}   &             & Z  & \swap{b}{t} & Z':=B_{k'}^{(j+1)}
    \end{array},
$$
with $s,t,\in H, a,b\not\in H$. Moreover, $b\not\in X$ unless $a=b$, and $t\in X$ unless $s=t$.

If $Y,Y',Z,Z'$ are all bases of $M$, then we may apply the degree 3 part of (*). So we assume some of them are of type 0, which cannot be $Y'$ nor $Z$, as they contain $a$ and $b$, respectively.\\

{\bf Case II(a)}: $Y$ is of type 0 but not $Z'$ (and the case $\type(Y)\neq 0=\type(Z')$ by symmetry). We have $b\not\in Y$. If $s=t$, then the following exchange sequence works:
$$
    \begin{array}{c c c c c c c}
        X   &             & X 
            & \swap{a}{b} & X' \\
        Y & \swap{s}{b} & Y-s+b = Y' - a + b
            & \swap{b}{a} & Y' \\
        Z   & \swap{b}{s} & Z'  
            &             & Z'
    \end{array},
$$
here $Y-s+b\in\Bcal(b)$, and if $a=b$, then the second step does nothing. So we assume $s\neq t$ for the rest of Case II(a), and we know $X-t+s=X'-b+a\in\Bcal(a), Y'-a+b\in\Bcal(b)$ are bases of $M$.

Suppose $t\not\in Y$. Then the following exchange sequence works:
$$
    \begin{array}{c c c c c c c}
        X   & \swap{t}{s} & X' - b + a 
            &             & X' - b + a 
            & \swap{a}{b} & X' \\
        Y & \swap{s}{t} & Y - s + t 
            & \swap{t}{b} & Y' - a + b 
            & \swap{b}{a} & Y' \\
        Z   &             & Z 
            & \swap{b}{t} &  Z'  
            &             & Z'
    \end{array}.
$$
Here $Y-s+t$ is of type 0, but so does $Y$, and $\Rcal$ is not expanded in the process. In the case of $a=b$, the last step does nothing.

Suppose $t\in Y$ instead. Then $t\in (Y'-a+b)\setminus (X'-b+a)$, so we may pick $q\in (X'-b+a)\setminus (Y'-a+b)$ such that $X'-bq+at=X-q+s, Y'-at+bq$ are bases of $M$; in particular, $q\neq a$ as $X'-b+t$ is of type 0, so $q\in X'-b\subset X+s-a$, and $q\not\in Y$ unless $q=s$. Now the following exchange sequence works:
$$
    \begin{array}{c c c c c c c c c}
        X   & \swap{q}{s} & X - q + s 
            &                & X - q + s & \swap{t}{q} & X' - b + a 
            & \swap{a}{b}    & X'\\
        Y & \swap{s}{q} & Y - s + q 
            & \swap{t}{b}    & Y' - at + bq & \swap{q}{t} & Y' - a + b 
            & \swap{b}{a}    & Y'\\
        Z   &                & Z
            & \swap{b}{t}    & Z' &                & Z'
                   &                & Z'
    \end{array},
$$
here $Y-s+q$ is of type 0 (but does not change $\Rcal$ as explained above). In the case of $q=s$, the first step does nothing so $X-q+s=X$ in the above does not contain $s$ after all; in the not mutually exclusive case $a=b$, the last step does nothing.\\

{\bf Case II(b)}: Both $Y,Z'$ are of type 0. We have $a\not\in Z$ unless $a=b$. If $s\not\in Z'$, then $s\neq t\in Z'$ thus $s\not\in Z$, and the following exchange sequence works:
$$
    \begin{array}{c c c c c c c}
        X   & \swap{a}{b} & X - a + b   &              & X - a + b  &      \swap{t}{s} & X'  \\
        Y  &             & Y            & \swap{s}{a}  & Y'               &             & Y'   \\
        Z   & \swap{b}{a} & Z - b + a    & \swap{a}{s}  & Z - b + s        & \swap{s}{t} & Z'  
    \end{array},
$$
here $X-a+b\in\Bcal(b), Z-b+a=Z'-t+a\in\Bcal(a)$, and $Z-b+s$ is of type 0 (but does not expand $\Rcal$ as $Z'$ already is). In the case of $a=b$, the first step does nothing.

If $s\in Z'$, then $s\in Z$ unless $s=t$, and the following exchange sequence works:
$$
    \begin{array}{c c c c c c c}
        X   & \swap{a}{b} & X - a + b & \swap{t}{s}  & X' &             & X' \\
        Y  &             & Y          &              & Y          & \swap{s}{a}  & Y'  \\
        Z   & \swap{b}{a} & Z - b + a & \swap{s}{t}  & Z' - s + a & \swap{a}{s}  & Z' 
    \end{array},
$$
here $X-a+b\in\Bcal(b), Z-b+a=Z'-t+a, Z'-s+a\in\Bcal(a)$. In the case of $a=b$ (and/or $s=t$), the first (and/or second) step does nothing.\\

As explained in Section~\ref{sec:induct}, Theorem~\ref{thm:B}(2) (for degree $\geq 3$) is now proven.

\begin{proof} [Proof of Theorem~\ref{thm:A}]
    We apply the same inductive proof as in the degree 2 case in Section~\ref{sec:deg2}, using Proposition~\ref{prop:XYZ_XYZ} and the general version of Theorem~\ref{thm:B}(2).
\end{proof}

\section{Open Problems}

We list a few research directions that could be follow-up of our work.

As mentioned in the introduction, paving matroids are special cases of the class of split matroids, and these matroids are closely related to the notion of {\em stressed subset} \cite{Ferroni2024}. A subset $A$ (of rank $r'$) of a matroid $M$ is {\em stressed} if $M|_A, M/A$ are both uniform, and we can {\em relax} $M$ at $A$ by declaring every subset $S$ of size $r$ satisfying $|S\cap A|\geq r'+1$ to be a basis (together with the original bases of $M$); a stressed hyperplane is thus a stressed subset of rank $r-1$. A(n elementary) split matroid can be characterized as a matroid that becomes uniform after relaxing all of its (non-trivial) stressed subsets \cite[Theorem~4.8]{Ferroni2024}. It may be possible to prove White's conjecture for split matroids following an inductive argument on the length of such a relaxation sequence.

More generally, as suggested by Mateusz Micha\l ek, one may look for other matroidal operations (besides the trivial ones such as direct sum) that preserve the validity of White's conjecture.

With our inductive component (Theorem~\ref{thm:B}) works beyond paving matroids, it is also interesting to look for other families of matroids related by stressed hyperplane relaxations, where (*) can be verified for all of their members while White's conjecture can be verified for any of their members.

Another direction is to strengthen Theorem~\ref{thm:A} to other versions of White's conjecture. On the combinatorial side, we ask what is the minimum $\ell$ that guarantees an exchange sequence exist? On the algebraic/geometric side, whether any of our techniques can be applied to prove some of the stronger variants, for example, whether one can produce suitable monomial order (perhaps using the induction process) so that the symmetric exchange binomials form a Gr\"{o}bner basis of $I_M$?

\bibliographystyle{alpha}  
\bibliography{ref}  

\begin{thebibliography}{MNWW11}

\bibitem[BdM08]{bonin2008}
Joseph~E. Bonin and Anna de~Mier.
\newblock The lattice of cyclic flats of a matroid.
\newblock {\em Ann. Comb.}, 12(2):155--170, 2008.

\bibitem[BL23]{backman2023}
Spencer Backman and Gaku Liu.
\newblock A regular unimodular triangulation of the matroid base polytope.
\newblock {\em arXiv preprint arXiv:2309.10229}, 2023.

\bibitem[Bon13]{BONIN2013}
Joseph~E. Bonin.
\newblock Basis-exchange properties of sparse paving matroids.
\newblock {\em Adv. in Appl. Math.}, 50(1):6--15, 2013.

\bibitem[BS24]{Berczi2024}
Krist\'of B\'erczi and Tam\'as Schwarcz.
\newblock Exchange distance of basis pairs in split matroids.
\newblock {\em SIAM J. Discrete Math.}, 38(1):132--147, 2024.

\bibitem[FNV23]{ferroni2023stressed}
Luis Ferroni, George~D. Nasr, and Lorenzo Vecchi.
\newblock Stressed hyperplanes and {K}azhdan-{L}usztig gamma-positivity for matroids.
\newblock {\em Int. Math. Res. Not. IMRN}, (24):20883--20942, 2023.

\bibitem[FS24]{Ferroni2024}
Luis Ferroni and Benjamin Schr\"oter.
\newblock Valuative invariants for large classes of matroids.
\newblock {\em J. Lond. Math. Soc. (2)}, 110(3):Paper No. e12984, 86, 2024.

\bibitem[Haw09]{Haws2009}
David~Carlisle Haws.
\newblock {\em Matroid polytopes: {A}lgorithms, theory, and applications}.
\newblock ProQuest LLC, Ann Arbor, MI, 2009.
\newblock Thesis (Ph.D.)--University of California, Davis.

\bibitem[HH02]{Herzog2002}
J\"urgen Herzog and Takayuki Hibi.
\newblock Discrete polymatroids.
\newblock {\em J. Algebraic Combin.}, 16(3):239--268, 2002.

\bibitem[HMW25]{han2025white}
Kangjin Han, Mateusz Micha{\l}ek, and Julian Weigert.
\newblock White's conjecture for matroids and inner projections.
\newblock {\em arXiv preprint arXiv:2501.17738}, 2025.

\bibitem[JS17]{Joswig2017}
Michael Joswig and Benjamin Schr\"oter.
\newblock Matroids from hypersimplex splits.
\newblock {\em J. Combin. Theory Ser. A}, 151:254--284, 2017.

\bibitem[LM14]{Lason2014}
Micha\l{} Laso\'n and Mateusz Micha{\l}ek.
\newblock On the toric ideal of a matroid.
\newblock {\em Adv. Math.}, 259:1--12, 2014.

\bibitem[MNWW11]{MNWW2011}
Dillon Mayhew, Mike Newman, Dominic Welsh, and Geoff Whittle.
\newblock On the asymptotic proportion of connected matroids.
\newblock {\em European J. Combin.}, 32(6):882--890, 2011.

\bibitem[MS21]{MichalekSturmfels}
Mateusz Micha{\l}ek and Bernd Sturmfels.
\newblock {\em Invitation to nonlinear algebra}, volume 211 of {\em Graduate Studies in Mathematics}.
\newblock American Mathematical Society, Providence, RI, [2021] \copyright 2021.

\bibitem[Oxl11]{Oxley}
James Oxley.
\newblock {\em Matroid theory}, volume~21 of {\em Oxford Graduate Texts in Mathematics}.
\newblock Oxford University Press, Oxford, second edition, 2011.

\bibitem[Whi80]{White1980}
Neil~L. White.
\newblock A unique exchange property for bases.
\newblock {\em Linear Algebra Appl.}, 31:81--91, 1980.

\end{thebibliography}

\end{document}